\def\add{{\rm add}}
\def\wh{\widehat}
\def\:{{\colon}}
\def\lam{{\lambda}}
\def\rmlog{{\rm log}}
\def\..{{,\dots,}}
\def\hyp{{\rm hyp}}
\def\hh{{h}}
\begin{document}

\author{Michael Temkin}
\address{Einstein Institute of Mathematics, The Hebrew University of Jerusalem, Giv'at Ram, Jerusalem, 91904, Israel}
\email{temkin@math.huji.ac.il}

\keywords{Berkovich curves, wild ramification, Herbrand function.}

\thanks{This work was supported by the European Union Seventh Framework Programme (FP7/2007-2013) under grant agreement 268182.}
\title{Metric uniformization of morphisms of Berkovich curves}

\begin{abstract}
We show that the metric structure of morphisms $f\:Y\to X$ between quasi-smooth compact Berkovich curves over an algebraically closed field admits a finite combinatorial description. In particular, for a large enough skeleton $\Gamma=(\Gamma_Y,\Gamma_X)$ of $f$, the sets $N_{f,\ge n}$ of points of $Y$ of multiplicity at least $n$ in the fiber are radial around $\Gamma_Y$ with the radius changing piecewise monomially along $\Gamma_Y$. In this case, for any interval $l=[z,y]\subset Y$ connecting a point $z$ of type 1 to the skeleton, the restriction $f|_l$ gives rise to a {\em profile} piecewise monomial function $\varphi_y\:[0,1]\to[0,1]$ that depends only on the type 2 point $y\in\Gamma_Y$. In particular, the metric structure of $f$ is determined by $\Gamma$ and the family of the profile functions $\{\varphi_y\}$ with $y\in\Gamma_Y^{(2)}$. We prove that this family is piecewise monomial in $y$ and naturally extends to the whole $Y$. In addition, we extend the classical theory of higher ramification groups to arbitrary real-valued fields and show that $\varphi_y$ coincides with the Herbrand function of $\calH(y)/\calH(f(y))$. This gives a curious geometric interpretation of the Herbrand function, which also applies to non-normal and even inseparable extensions.
\end{abstract}
\maketitle

\section{Introduction}

\subsection{Motivation}
We start with a slightly informal description of the problem and its history that does not require a special knowledge of Berkovich geometry. This paper and its prequel \cite{CTT} study combinatorial structure of morphisms $f\:Y\to X$ between non-archimedean curves over an algebraically closed ground field $k$. We work within the modern framework of Berkovich geometry but the problem is older and can also be asked in the languages of rigid geometry or formal models. It is well known that the combinatorial structure of a smooth and proper non-archimedean curve $X$ is controlled by the closed fiber $\gtX_s$ of any semistable models $\gtX$. In particular, the incidence graph $\Gamma_\gtX$ of $\gtX_s$ naturally embeds into $X$ (in Berkovich setting) and $X\setminus\Gamma_\gtX$ is a disjoint union of open discs. One calls $\Gamma_\gtX$ a skeleton of $X$. Note also that $\Gamma_\gtX$ has a natural structure of a metric genus graph: each vertex is labeled with a number -- the genus of the corresponding irreducible component, and each edge is provided with length -- the modulus of the corresponding analytic annulus.

Before this project the main tool for studying $f$ was the simultaneous semistable reduction theorem: $X$ and $Y$ possess semistable formal models $\gtX$ and $\gtY$ such that $f$ extends to a finite morphism $\gtf\:\gtY\to\gtX$. In particular, $f^{-1}(\Gamma_\gtX)=\Gamma_\gtY$ and $f$ induces a map of metric genus graphs satisfying natural balancing conditions (local constancy of degree and analogs of Riemann-Hurwitz formulas at vertices). Moreover, enlarging the skeleta (and allowing edges of infinite length) one can include the ramification points into $\Gamma_\gtY$ and then $f$ splits to \'etale covers of open discs by open discs on the complements of the skeleta. In the tame case, such covers are split, so, again, all combinatorial structure is encoded by the skeleton $\Gamma_\gtY\to\Gamma_\gtX$ of $f$ or by the closed fiber of $\gtf$. This is worked out by Amini--Baker--Brugall\'e--Rabinoff in \cite{ABBR}.

It was clear that $\Gamma_\gtY\to\Gamma_\gtX$ does not provide an adequate description in the wild case. In particular, one has the following three tightly related indications: (a) \'etale covers of discs can be complicated, (b) the locus $N_{f,>1}\subset Y$, where $f$ is not a local isomorphism, can be a huge set, e.g. the set of all points of a disc of large enough radius, (c) the map $\gtf_s$ does not have to be generically \'etale and in this case it is not really informative, e.g. there are no informative local Riemann-Hurwitz formulas. This indicated that, non-surprisingly, the wild case is substantially more complicated, but it was unclear if there is a finer combinatorics that can explain these phenomena. In fact, there were no positive conjectures in that direction.

The current project started with an observation that one can use the different to explain strange behaviour of double covers $f\:E\to\bfP^1_k$, where $k=\bfC_2$ and $E$ is an elliptic curve with good supersingular reduction. In such case, the point $z\in E$ corresponding to the elliptic component seems to ``appear out of nowhere". For example, if $f$ is given by $y^2=x(x-1)(x-\lam)$ then $f(z)$ lies in the disc around $\sqrt{\lam}$, where no ramification happens (see \cite[\S7.2]{CTT} for a detailed description of such double covers). It was shown in \cite{CTT} that the family of differents viewed as a function $\delta_f\:Y\to[0,1]$ and its restriction onto the skeleta provides a new combinatorial invariant that clarifies the structure of $f$ drastically. In particular, it extends the local Riemann-Hurwitz formula to the case when $\gtf_s$ is inseparable and it provides a ``finite" combinatorial description of the set $N_{f,>1}$ when $f$ is not too wild (the local degrees are not divisible by $p^2$).

The aim of this paper is to remove the latter restriction, that is, to show that there always is a finite datum that completely controls the combinatorial properties of $f$. This will involve a series of piecewise monomial functions on large enough skeleta that can be viewed as finer ramification invariants. Although we will first construct these functions in an elementary self-contained way, one may wonder if they have a natural classical interpretation. In the second part of the paper we will answer this question affirmatively, namely the string of the new invariants can be interpreted as the Herbrand function of the extensions $\calH(y)/\calH(f(y))$ for $y\in\Gamma_\gtY$. A technical obstacle here is that $\calH(y)$ is not discretely valued and to make this precise we will first have to extend the classical ramification theory to the non-discrete case.

\subsubsection{Metric structure and the multiplicity}
Now, let us formulate the goals of this paper in precise terms. Assume that $f\:Y\to X$ is a finite morphism between nice Berkovich curves (see \ref{nicesec}) over an algebraically closed ground field $k$. Note that $Y$ and $X$ possess a natural exponential metric and $f$ is piecewise monomial on intervals $I\subset Y$ with respect to this metric; for example, see \cite[Lemma~3.6.8]{CTT}. Also, there is a natural multiplicity function $n_f\:Y\to\bfN$ associated with $f$, see \S\ref{multsec}.

Our aim is to find a ``finite combinatorial" description of $f$ as a piecewise monomial map between metric graphs. If $f|_I$ is of a constant slope $m$ then $n_f=|m|$ almost everywhere on $I$ (e.g., this follows from \cite[Lemma~3.5.8]{CTT}). In particular, the metric structure of $f$ is described by the multiplicity function $n_f\:Y\to\bfN$, or just by the loci $N_{f,\ge d}$ of points $y\in Y$ of multiplicity at least $d$.

\subsubsection{The prequel}
If $f$ is residually tame, the situation is very simple since $N_{f,\ge 2}$ is contained in a finite graph. However, the sets $N_{f,p^d}$ with $p=\cha(\tilk)$ can be very large in the residually wild case, and their structure was absolutely unclear until very recently; we refer to \cite[Introduction]{CTT} for description of partial results that were known earlier. The aim of this paper and its prequel \cite{CTT} was to find a reasonable combinatorial description of the sets $N_{f,\ge d}$. In \cite{CTT}, we studied the simplest invariant that distinguishes wild ramification -- the different. In particular, we showed that the different function $\delta_f\:Y\to[0,1]$ is piecewise monomial, satisfies a balancing condition at type 2 points and relates the genus of $Y$ to that of $X$. In addition, we showed that the different increases in a standard way outside of any skeleton of $f$, \cite[Theorem~6.1.9]{CTT}, and it completely controls the set $N_{f,p}$ for morphisms of degree $p$, \cite[Theorem~7.1.4]{CTT}. Namely, $N_{f,p}$ is a radial set with center $\Gamma_Y$ and of radius $\delta_f^{1/(p-1)}$.

\subsubsection{This paper}
The first goal of this paper is to prove the radialization theorem that all sets $N_{f,\ge d}$ are radial with respect to a large enough skeleton $\Gamma$ of $f$, and their $\Gamma$-radii are piecewise $|k^\times|$-monomial functions on $\Gamma$. This is the result mentioned in \cite[1.4]{CTT}, and it is proved in the first half of the paper in a pretty elementary and self-contained way. In particular, we use only very basic properties of the different from \cite{CTT}.

Once the radialization theorem is proved, the second goal is to describe the radii of $N_{f,\ge d}$ in terms of classical ramification invariants. The information about the radii is equivalently encoded in the piecewise monomial profile function mentioned in the abstract, and we achieve the second goal by interpreting the profile function as the Herbrand function. In particular, the radii around $y\in\Gamma_Y$ are directly related to the break points of the higher ramification filtration of $\calH(y)/\calH(f(y))$, see Theorem~\ref{radiith}. Note that to make this rigorous, we have also to extend the classical higher ramification theory to real-valued fields with non-discrete valuations.

\subsection{Method and main results}

\subsubsection{The splitting method}
All main results are proved by the same splitting method that reduces the general case to the tame and degree-$p$ cases. For example, we independently introduce and study two types of piecewise monomial functions before comparing them: the profile functions and the Herbrand functions. In both cases we show that

(1) The function is compatible with compositions (of functions or of field extensions, respectively).

(2) The function is trivial in the tame case.

(3) If the degree is $p$ then the function is described by the different $\delta$ as follows: the slopes are 1 and $p$ and the break is at $\delta^{1/(p-1)}$.

The families of such functions $\varphi_y$ or $\varphi_{L/K}$ are completely described by these three conditions. For example, in the case of extensions take the Galois closure $F/K$ of $L/K$. Then $\varphi_{L/K}$ is determined by $\varphi_{F/K}$ and $\varphi_{F/L}$. The two other extensions are Galois, hence split into compositions of tame extensions and extensions of degree $p$, and hence their Herbrand functions are determined by (1)--(3). A similar argument works for $\varphi_y$ after a localization on $X$, see Theorem~\ref{invth}.

\subsubsection{Radialization theorems}
If $l=[z,y]$ is an interval in $Y$ with $z$ of type 1 and $y$ of type 2 then the exponential metric of $Y$ provides a natural homeomorphism of $l$ onto $[0,1]$. We say that a skeleton $\Gamma$ {\em radializes} $f$ if for any interval $l=[z,y]\subset Y$ connecting a point $z$ of type 1 to the skeleton, the restriction $f|_l$, viewed as a function $\varphi_y\:[0,1]\to[0,1]$, depends only on $y$. The collection $\{\varphi_y\}$ is then called the {\em profile} of $f$. It is easy to see that $\Gamma$ radializes $f$ if and only if all sets $N_{f,\ge d}$ are $\Gamma$-radial, see Theorem~\ref{multth}.

Our first main result is that any finite morphism between nice $k$-analytic curves is radialized by a large enough skeleton, see Theorem~\ref{radialth} and Lemma~\ref{enlargelem}(ii). Moreover, we show that if $f$ is either a normal covering, or residually tame, or of degree $p$ then any skeleton of $f$ is radializing, see Theorems~\ref{galoisth} and \ref{simplelem} and Lemma~\ref{tamelem}. Note that we establish the residually tame and degree-$p$ cases first, and the other claims are deduced by local factorization of $f$ into morphisms of these two types.

In addition, we provide examples in Section~\ref{nonradsec} of non-radializing skeletons when the degree of $f$ equals $2p$ and $p^2$.

\subsubsection{The global profile function}\label{globalprofilesec}
To complete the combinatorial description of $f$, one should also show that the $\Gamma$-radii of the sets $N_{f,\ge d}$ depend piecewise monomially on $y\in\Gamma$. Equivalently, one should prove that the profile functions vary piecewise monomially. In fact, we solve a slightly more general problem. Since profile functions are compatible with extensions of skeletons, the radialization theorem implies that to any type 2 point $y$ there is assigned a profile function $\varphi_y$ which possesses the following geometric interpretation: if $l=[z,y]$ is a path starting at a point of type 1 and approaching $y$ from a general direction (i.e. from any but finitely many directions) then $\varphi_y=f|_l$. We prove that this family depends piecewise monomially on $y$ and extends to the set $Y^\hyp$ of all points not of type 1, see Theorem~\ref{extth}.

\subsubsection{Herbrand function}
It is natural to expect that $\varphi_y$ is determined by the ramification theory of the field extension $\calH(y)/\calH(f(y))$. We prove that, indeed, $\varphi_y$ is nothing else but the Herbrand function of $\calH(y)/\calH(f(y))$. Using the splitting method the proof reduces to the tame and degree-$p$ cases, where the comparison is simple. The only obstacle is that the theory of higher ramification was not developed in the non-discrete case, so our main task is to complete this gap. It is known that the meaningful theory of Herbrand functions and upper indexed ramification groups exists only for certain classes of extensions. In the classical situation, one considers monogeneous extensions. In the non-discrete case, one should replace this with a sort of an ``almost" condition. We introduce in Section~\ref{almsec} almost monogeneous extensions and develop for them the theory of upper indexed ramification groups. In addition, we prove that if $x$ is a point of a $k$-analytic curve then any finite extension of $\calH(x)$ is almost monogeneous. On the other hand, we do not know what is the most general class of extensions for which the theory works properly, but see Remark~\ref{logmonrem}(ii) for a possible candidate.

\subsubsection{Structure of the paper}
In Section~\ref{discsec} we study radial morphisms between open discs. This section is very simple and it serves as a preparation to Section~\ref{radsec}, where the radialization theorems are proved. In addition, we extensively study the profile function in Section~\ref{profsec}. In Section~\ref{highsec}, we develop the theory of ramification groups and Herbrand functions $\varphi_{L/K}$ for general real-valued fields, and prove in Theorem~\ref{comparth} that $\varphi_y=\varphi_{\calH(y)/\calH(f(y))}$. In particular, this provides a complete description of the sets $N_{f,\ge d}$ in terms of the Herbrand function, see Theorem~\ref{radiith}. Finally, we explain in the end of Section~\ref{concectsec} how the limit behaviour of $\varphi_y$ when approaching type 1 and 2 points can be naturally described in terms of the {\em logarithmic} Herbrand function of the corresponding extension of valued fields of height two. Since the latter notion is not developed in this paper (and is missing in the literature in the non-discrete case), we only indicate a justification of this description.

%\setcounter{tocdepth}{1}
%\tableofcontents

\section{Radial morphisms between open discs}\label{discsec}

\subsection{Conventions}

\subsubsection{Ground field}
Throughout the paper $k$ is an algebraically closed complete real-valued field. The valuation can be trivial, though in this case our results are trivial too. By $p\in\{1,2,3,5,7,\dots\}$ we denote the characteristic exponent of $\tilk$. The case of $p=1$ is included for completeness (e.g., see Lemma~\ref{multdisclem} and Remark~\ref{multdiscrem} below), but it will be essentially trivial, and the reader can safely assume that $p>1$.

\subsubsection{$k$-analytic spaces}
We work with strictly $k$-analytic spaces as defined by Berkovich in \cite[Section~1]{berihes}. In particular, $\calM(\calA)$ denotes the spectrum of a Banach $k$-algebra $\calA$ and $\calH(x)$ denotes the completed residue field of a point $x$ on a $k$-analytic space. The valuation of $\calH(x)$ will be denoted of $|\ |_x$, or simply $|\ |$ if $x$ is clear from the context. Also, by $\calAcirc$ the denote the ring of power-bounded elements, by $\calAcirccirc$ the ideal of topologically nilpotent elements and by $\tilcalA=\calAcirc/\calAcirccirc$ the reduction.

\subsubsection{Types of points}
Recall that points on $k$-analytic curves are classified into four types accordingly to $K=\calH(x)$ (e.g., see \cite[Section~3.6]{berihes}): (1) $K=k$, (2) $\tilk\subsetneq\tilK$, (3) $|k^\times|\subsetneq|K^\times|$, (4) the rest.

\subsubsection{Nice compact curves}\label{nicesec}
For shortness, a {\em nice compact curve} means a compact connected separated quasi-smooth strictly $k$-analytic curve throughout this paper. Recall that a curve $X$ is quasi-smooth if it is smooth at all points of type 1, but it may have a boundary. For technical convenience, we include the connectedness assumption, but it can be removed in our main results just by working separately with the connected components.

\subsubsection{Multiplicity}\label{multsec}
Assume that $f\:Y\to X$ is a finite morphism of nice compact curves. Given a point $y\in Y$ with $x=f(y)$ consider the maximal ideals $m_x\subset\calO_{X,x}$ and $m_y\subset\calO_{Y,y}$. Then $m_y=m_x^e\calO_{Y,y}$, where we take $e=1$ if $m_x=0$, and we define the {\em multiplicity} of $f$ at $y$ to be equal to $n_y=e\cdot [\calH(y):\calH(x)]$. The function $n_f\:Y\to\bfN$ sending $y$ to $n_y$ will be called the {\em multiplicity function} associated with $f$. By $N_{f,d}$ or simply $N_d$ we will denote the {\em multiplicity-$d$ locus}, i.e. the set of points $y\in Y$ with $n_f(y)=d$. %We will also use notation $N_{>d}$ and $N_{\ge d}$.

\subsubsection{Open discs}
By an {\em open disc} $E$ we will always mean an open disc whose radius lies in $|k^\times|$, i.e. $E$ is isomorphic to the open unit disc in $\bfA^1_k$.

\subsection{The PL structure}
First, we recall some well known facts related to the piecewise linear structure of open discs. In fact, this will be a piecewise monomial structure since we use the multiplicative notation.

\subsubsection{Piecewise $k^\times$-monomial functions}
As in \cite[Section 1]{bercontr2} or \cite[3.6.3]{CTT}, by a piecewise $|k^\times|$-monomial function on an interval $I\subseteq\bfR_{\ge 0}$ we mean a function $h\:I\to\bfR_{\ge 0}$ such that $I$ is a finite union of closed intervals $I_i$ and $h|_{I_i}$ is a monomial function of the form $c_it^{n_i}$ with $c_i\in|k^\times|$ and $n_i\in\bfZ$; in particular, $h$ is continuous. The integers $n_i$ will be called {\em degrees} or {\em slopes} of $h$.

\subsubsection{Radius function}
Assume that $D$ is an open disc. By a {\em monic coordinate} on $D$ we mean any element $t\in\Gamma(\calO_D)$ that induces an isomorphism of $D$ with the unit open disc. The radius function $r_t(x)=\inf_{c\in k}|t-c|_x$ associated with $t$ is independent of the choice of $t$, and we call it the (monic) radius function of $D$ and denote $r_D$.

\subsubsection{Intervals}\label{intsec}
For any point $x\in D$, by $l_x$ we denote the upward interval starting at $x$, i.e. $l_x$ is semiopen, $x$ is the endpoint of $l_x$ and $l_x$ is relatively compact in $D$. The radius function $r_D$ induces a homeomorphism of $l_x$ onto the interval $[r_D(x),1)$, that we call the {\em radius parametrization} of $l_x$. In particular, if $x$ is a point of type 1
then $l_x$ is identified with the interval $[0,1)$ and given any function $\hh\:D\to\bfR$ we will denote by $\hh_x\:[0,1)\to\bfR$ the restriction of $\hh$ onto $l_x$.

\subsubsection{Restriction of morphisms onto intervals}\label{morintsec}
If $f\:E\to D$ is a morphism of open discs, $y\in E$ and $x=f(y)$ then $f$ maps $l_y$ to $l_x$. In particular, if $y$ is of type 1 then $x$ is of type 1 too and using the radius parameterizations we obtain a map $[0,1)=l_y\to l_x=[0,1)$ that will be denoted $f_y$.

\begin{lem}\label{interlem}
Assume that $f\:E\to D$ is a finite morphism of open discs and $y\in E$ is a point of type 1. Then,

(i) $f_y$ is a piecewise $k^\times$-monomial function that bijectively maps $[0,1)$ onto itself.

(ii) The right logarithmic derivative of $f_y$ coincides with the restriction of the multiplicity function $n_f$ onto $l_y$.
\end{lem}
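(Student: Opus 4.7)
My plan is to read off both parts from an explicit formula for $f_y$ in local coordinates.

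First I would normalize the setting. Using the translation automorphisms of $E$ and $D$, choose monic coordinates $s$ on $E$ and $t$ on $D$ with $s(y)=0$ and $t(x)=0$; then $f^*t=P(s)=\sum_{i\ge 1}a_is^i$ is a power series converging on $|s|<1$ with $P(0)=0$. The point of $l_y$ at radius $r\in[0,1)$ is the Gauss point $\eta_r$ of $\{|s|\le r\}$, at which $|g|_{\eta_r}=\max_i|b_i|r^i$ for any $g=\sum b_is^i$. Since the infimum $\inf_{c\in k}|P-c|_{\eta_r}$ is attained at $c=0$ (any other choice only enlarges the maximum by contributing an extra term $|c|$), I obtain
\[
  f_y(r) \;=\; \max_{i\ge 1}|a_i|\,r^i,
\]
a max of monomials with coefficients in $|k^\times|$ and exponents in $\bfN$. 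This is therefore piecewise $|k^\times|$-monomial, with finitely many pieces on $[0,1)$ by \cite[Lemma~3.6.8]{CTT}.

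Part (ii) is then a direct specialization of \cite[Lemma~3.5.8]{CTT} to the interval $l_y$, identifying the right logarithmic derivative of $f$ along $l_y$ with the multiplicity function $n_f$. Granted (ii), the slopes of $f_y$ are positive integers, so $f_y$ is strictly increasing and in particular injective. Combined with $f_y(0)=0$ (which holds because $y$, and therefore $x=f(y)$, is rigid of radius zero) the slope bound $\ge 1$ gives $f_y(r)\ge r$; this forces $\lim_{r\to 1^-}f_y(r)=1$, so by continuity and monotonicity the image is all of $[0,1)$.

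The step that requires the most care is the finiteness of the Newton polygon of $P$ on the full semi-open interval $[0,1)$, i.e.\ the fact that $f_y$ has globally finitely many monomial pieces rather than just locally finitely many. This is where the hypothesis that $f$ is finite genuinely enters; I would either cite \cite[Lemma~3.6.8]{CTT}, or, for a self-contained argument, use Weierstrass preparation for open discs to write $P$ as the product of a polynomial of degree $d=\deg f$ and a unit in $\calO(E)$, whose Newton polygon has at most $d+1$ vertices.
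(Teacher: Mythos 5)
Your normalization, the formula $f_y(r)=\max_{i\ge 1}|a_i|r^i$, and the identification of the slope with $n_f$ are exactly the paper's argument, and parts (i)-without-surjectivity and (ii) go through as you describe. But the surjectivity step contains a genuine error: the inequality $f_y(r)\ge r$ is false, and in fact the opposite inequality holds. Since $P$ maps $E$ into the open unit disc, every coefficient satisfies $|a_i|\le 1$, so $f_y(r)=\max_{i\ge 1}|a_i|r^i\le\max_{i\ge1}r^i=r$ for $r<1$. In logarithmic coordinates the picture is: a piecewise linear function with slopes $\ge 1$ starting at height $\ge 0$ lies \emph{above} the diagonal, which translates back to $f_y\le r$, not $f_y\ge r$. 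Consequently ``slopes $\ge 1$ plus $f_y(0)=0$'' cannot force $\lim_{r\to 1^-}f_y(r)=1$: the function $\varphi(r)=|a|r$ with $0<|a|<1$ satisfies both hypotheses and maps $[0,1)$ onto $[0,|a|)$ only. This $\varphi$ is precisely what arises from an isomorphism of $E$ onto a strictly smaller subdisc of $D$, so the hypothesis your argument fails to exploit at this point is that $f$ is finite \emph{onto} $D$.

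The repair is what the paper does: a finite morphism of discs is surjective, and the image of the closed subdisc $\{|s|\le r\}$ under $P$ is the closed disc $\{|t|\le f_y(r)\}$; since these exhaust $D$ as $r\to 1$, one gets $\lim_{r\to 1^-}f_y(r)=1$. Equivalently, finiteness forces $\sup_i|a_i|=1$ (this is the normalization the paper records when setting up finite morphisms of discs), whence $f_y(r)\ge |a_d|r^d=r^{d}$ with $d=\deg f$, and the limit is $1$. With that one step replaced, the rest of your proof --- strict monotonicity from positive integer slopes, global finiteness of the number of pieces via Weierstrass preparation (the paper gets this implicitly from the fact that the dominant exponent $m=n_f$ is increasing and bounded by $\deg f$), and the citation of the slope-equals-multiplicity lemma of the prequel for (ii) --- coincides with the paper's argument.
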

\begin{proof}
Choose monic parameters of $E$ and $D$ so that $y$ and $f(y)$ become the origins. Then $f$ is given by a series $\phi(t)=\sum_{i=1}^\infty c_it^i$. Let $z$ be the point of $l_y$ of radius $r$. Then $n_f(z)$ is the maximal number $m$ such that $\max_i|c_i|r^i=|c_m|r^m$. In particular, it follows that $n_f$ is an increasing step function.

Let $r<r'<1$ and let $z'$ be the point of $l_y$ of radius $r'$. Taking $r'$ close enough to $r$ we can achieve that $c_mt^m$ is the dominant term of $\phi(t)$ on the interval $[z,z']$, and then $f_y=|c_mt^m|$ on $[z,z']$. This shows that $f_y$ is a strictly increasing piecewise $k^\times$-monomial function and its right logarithmic derivative is equal to $m=n_f$ on $[z,z')$. The bijectivity of $f_y$ follows from the fact that $\lim_{r\to 1}f_y(r)=1$ as $f$ is surjective.
\end{proof}

\subsection{Radial morphisms}

\subsubsection{Radial functions}
We say that a function $\hh\:D\to\bfR$ is {\em radial} if it factors through the radius function, say, $\hh(x)=\varphi(r(x))$ for a real-valued function $\varphi$ on $[0,1)$. We call $\varphi$ the {\em profile} of $\hh$; it will be denoted $\varphi_h$ when needed.

\subsubsection{Radial morphisms}
A morphism $f\:Y\to X$ between open discs is called {\em radial} if the real-valued function $r_X\circ f$ on $Y$ is radial. This happens if and only if there exists a function $\varphi\:[0,1)\to[0,1)$ such that $r_X\circ f=\varphi\circ r_Y$. We call $\varphi=\varphi_f$ the {\em profile} of $f$.

\subsubsection{A criterion for being radial}
A geometric meaning of being radial is described in the following lemma, where the notation $\hh_y$ and $f_y$ is as in \ref{intsec}--\ref{morintsec}.

\begin{lem}\label{critlem}
(i) Assume that $D$ is an open disc and $\hh\:D\to\bfR$ is a function such that for any type 4 point $x$ the restriction of $\hh$ onto $l_x$ is continuous at $x$. Then $\hh$ is radial if and only if the functions $\hh_y$ coincide for all points $y$ of type 1. In this case, $\hh_y$ is the profile function of $\hh$.

(ii) Assume that $f\:Y\to X$ is a morphism of open discs. Then $f$ is radial if and only if the maps $f_y$ coincide for all points $y$ of type 1. In this case, $f_y$ is the profile function of $f$.
\end{lem}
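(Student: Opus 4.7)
The forward directions in both parts are immediate from unwinding the definitions: if $h=\varphi\circ r_D$ then for every rigid $y$ and $s\in[0,1)$, $h_y(s)$ equals $h$ evaluated at the point of $l_y$ of radius $s$, which is $\varphi(s)$ independently of $y$; thus all $h_y$ coincide with $\varphi$. For (ii), apply this to $h:=r_X\circ f$ and note that $(r_X\circ f)_y$ and $f_y\:[0,1)\to[0,1)$ are, by construction, the same function.

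For the backward direction of (i), assume all $h_y$ coincide, call the common function $\varphi\:[0,1)\to\bfR$, and verify $h=\varphi\circ r_D$ pointwise. If $x$ is of type $1$, $2$, or $3$, there is a rigid $y$ with $x\in l_y$: for rigid $x$ take $y=x$; otherwise $x$ is the Shilov point of some closed sub-disc $\bar D(c,r)\subset D$, and any rigid point of $\bar D(c,r)$ works. The radius parametrization of $l_y$ then gives $h(x)=h_y(r_D(x))=\varphi(r_D(x))$.

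For $x$ of type $4$, no rigid $y$ satisfies $x\in l_y$, which is precisely where the continuity hypothesis enters. Pick $z_n\in l_x\setminus\{x\}$ with $r_D(z_n)\downarrow r_D(x)$; each $z_n$ is of type $2$ or $3$, so the previous case gives $h(z_n)=\varphi(r_D(z_n))$, and continuity of $h|_{l_x}$ at $x$ yields $h(x)=\lim_{s\downarrow r_D(x)}\varphi(s)$. To match this right limit with the separately defined value $\varphi(r_D(x))$, choose a rigid sequence $y_n\to x$ in $D$ (density of rigid points): the meeting points of $l_{y_n}$ with $l_x$ lie on both intervals and have radii strictly above $r_D(x)$ decreasing to $r_D(x)$, and the consistency $h_{y_n}=\varphi$ pins down $\varphi(r_D(x))$ as the required limit. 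Hence $h(x)=\varphi(r_D(x))$.

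Part (ii) is then (i) applied to $h=r_X\circ f$, which is continuous on all of $Y$ as a composition of continuous maps, so the type-$4$ hypothesis of (i) is automatic; the identity $(r_X\circ f)_y=f_y$ noted above translates the criterion of (i) into the statement of (ii) and gives the profile identification. The main obstacle is the type-$4$ step in the converse of (i): the absence of rigid points below a type-$4$ point forces a limiting argument, and the delicate point is to reconcile two a priori different values of $\varphi$ at $r_D(x)$ — the defining value $h_y(r_D(x))$ and the right limit from continuity of $h|_{l_x}$ — via the density of rigid points.
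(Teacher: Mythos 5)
Your proof follows the same route as the paper's: the paper deletes the type 4 points, observes that the complement is covered by the intervals $l_y$ with $y$ rigid (your type 1/2/3 case), and appeals to the continuity hypothesis to recover the type 4 points; you simply carry this out pointwise, and the forward directions and the type 1/2/3 case of the converse are fine.

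The one step that does not close is exactly the one you flag as delicate. Your meeting points $w_n$ of $l_{y_n}$ with $l_x$ all have radii $s_n$ strictly greater than $s=r_D(x)$, so the identities $h(w_n)=\varphi(s_n)$ combined with continuity of $h|_{l_x}$ at $x$ only yield $h(x)=\lim_{s'\downarrow s}\varphi(s')$; they give no information about $\varphi(s)$ itself, which by definition is the value of $h$ at the type 2 or 3 point of radius $s$ on $l_y$ --- a point distinct from $x$, at which no continuity is assumed. What your argument actually needs (and what the paper's one-line justification ``$h$ is radial iff $h|_{D'}$ is radial'' also uses silently) is right-continuity of the common profile $\varphi=h_y$ at every radius realized by a type 4 point, and this does not follow from the stated hypotheses: take $h$ equal to $0$ at all points of type 1, 2, 3 of radius at most $1/2$ and equal to $1$ elsewhere; then all $h_y$ coincide and $h|_{l_x}$ is continuous at every type 4 point $x$, yet $h$ is not radial as soon as some type 4 point has radius exactly $1/2$. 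In the paper's applications of part (i) this is harmless: for $h=n_f$ the restriction to each $l_y$ is right-continuous in the radius (by the computation in Lemma~\ref{interlem}, at a tie of dominant terms the multiplicity takes the larger value), and for $h=r_X\circ f$ in part (ii) the restriction to every interval is genuinely continuous. So you should either add right-continuity of $\varphi$ as a hypothesis verified in the applications, or replace the meeting-points step by that verification. A minor further correction for (ii): $r_X$ is not continuous on $X$ in the Berkovich topology (only upper semicontinuous), but its restriction to each interval $l_{f(x)}$ is continuous, which is all the argument requires.
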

\begin{proof}
Let $D'$ be obtained by removing from $D$ all points of type 4. By the continuity assumption, $h$ is radial if and only if its restriction onto $D'$ is radial. Since $D'$ is covered by the intervals $l_y$ with $y$ a point of type 1, the claim of (i) becomes obvious. The second claim is proved similarly, but this time no continuity assumption is needed because $f$ is automatically continuous.
\end{proof}

\subsubsection{Radial morphisms and the multiplicity function}
It turns out that to check that a morphism is radial it suffices to check that a single integer-valued function, the multiplicity function, is radial.

\begin{lem}\label{multlem}
A morphism between open discs $f\:Y\to X$ is radial if and only if the multiplicity function $n_f$ is radial. In this case, the profile of $n_f$ is the logarithmic derivative from the right of the profile of $f$.
\end{lem}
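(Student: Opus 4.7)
The plan is to exploit Lemmas~\ref{critlem}(ii) and~\ref{interlem} in tandem: the former characterizes radiality of $f$ by equality of the family $\{f_y\}$ over rigid $y$, while Lemma~\ref{interlem}(ii) identifies the right logarithmic derivative of each $f_y$ with the restriction $(n_f)_y$. So both conditions get encoded in families of one-variable functions linked by differentiation.

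For the forward implication, I would start from the identity $r_X\circ f=\varphi_f\circ r_Y$ and take a rigid $y\in Y$: restriction to $l_y$ immediately gives $f_y=\varphi_f$ on $[0,1)$, and Lemma~\ref{interlem}(ii) then shows $(n_f)_y$ equals the right logarithmic derivative $\psi$ of $\varphi_f$. To extend to arbitrary points, I would rerun the same argument on $l_z$ for a general $z\in Y$: since $f$ sends $l_z$ to $l_{f(z)}$ and respects radii, the restriction of $f$ to $l_z$ in the radius parametrizations is $\varphi_f|_{[r_Y(z),1)}$, whose right logarithmic slope at $z$ equals $n_f(z)$ by the general identification of $n_f$ with the slope of $f$ along any interval through $z$. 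Thus $n_f(z)=\psi(r_Y(z))$, proving $n_f$ is radial with profile $\psi$.

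For the converse, assume $n_f$ is radial, so that all $(n_f)_y$ coincide with one profile $\psi$ on $[0,1)$. By Lemma~\ref{interlem}, each $f_y$ is a piecewise $|k^\times|$-monomial bijection $[0,1)\to[0,1)$ whose right logarithmic derivative is $\psi$. Passing to logarithmic coordinates $s=\log r$, the maps $g_y(s)=\log f_y(e^s)$ become piecewise linear bijections of $(-\infty,0)$ with common right slope $\psi(e^s)$. Two such functions differ by a continuous function of vanishing right derivative, hence by a constant. The bijectivity clause of Lemma~\ref{interlem}(i) forces $f_y(r)\to 1$ as $r\to 1^-$, so $g_y(s)\to 0$ as $s\to 0^-$, pinning the constant to zero. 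Hence all $f_y$ agree, and Lemma~\ref{critlem}(ii) delivers the radiality of $f$, with profile $\varphi_f=f_y$ whose right logarithmic derivative is $\psi$.

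The main technical step is this last reconstruction: recovering $f_y$ from its right logarithmic derivative. It reduces to the elementary fact that a continuous function on an interval with vanishing right derivative is constant, combined with the boundary normalization $f_y(1^-)=1$ coming from the surjectivity of $f$; the rest of the argument is formal bookkeeping around the two cited lemmas.
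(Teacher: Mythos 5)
Your argument is correct and takes essentially the same route as the paper: both rest on Lemma~\ref{critlem} together with the identification in Lemma~\ref{interlem}(ii) of the right logarithmic derivative of $f_y$ with $(n_f)_y$, the converse hinging on recovering $f_y$ from that derivative via the normalization $f_y(r)\to 1$. The paper leaves this reconstruction step implicit and treats the type~4 points by verifying the continuity hypothesis of Lemma~\ref{critlem}(i) for $n_f$ (via \cite[Lemma~3.6.10]{CTT}) rather than by your direct slope computation along $l_z$, but these are only differences of presentation.
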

\begin{proof}
Set $\hh=n_f$ for shortness. Note that the criterion of Lemma~\ref{critlem}(i) applies to $\hh$ because its restriction onto any interval $l_y$ can be discontinuous only at type 2 points by \cite[Lemma~3.6.10]{CTT}. Therefore the lemma follows from Lemma~\ref{critlem} and the fact that for any point $y\in Y$ of type 1 the logarithmic derivative of $f_y$ from the right coincides with $\hh_y$ by Lemma~\ref{interlem}.
\end{proof}

\subsubsection{Composition}
Radial morphisms satisfy the two out of three property with respect to compositions.

\begin{lem}\label{composlem}
Let $f\:Z\to Y$ and $g\:Y\to X$ be finite morphisms between open discs and $h=g\circ f$. If any two morphisms from the triple $f,g,h$ are radial then all three are so. In this case, the profiles are related by the rule $\varphi_h=\varphi_g\circ \varphi_f$.
\end{lem}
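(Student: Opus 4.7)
The plan is to unravel the definition of radial: $f$ is radial iff $r_Y\circ f=\varphi_f\circ r_Z$, and similarly for $g$ and $h$. So all three statements are functional equations of the same form, and if two hold, the third should follow by substitution, provided that the relevant profile function is invertible where needed.

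First I would record the key auxiliary fact that when $f\:Z\to Y$ is a radial finite morphism between open discs, its profile $\varphi_f\:[0,1)\to[0,1)$ is automatically a bijection. Indeed, by Lemma~\ref{critlem}(ii), for any rigid point $z\in Z$ we have $\varphi_f=f_z$, and by Lemma~\ref{interlem}(i) the map $f_z$ is a piecewise $|k^\times|$-monomial bijection of $[0,1)$ onto itself. In particular $\varphi_f^{-1}$ is a well defined function $[0,1)\to[0,1)$; the same holds for $\varphi_g$ and $\varphi_h$ when $g$ or $h$ is radial.

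Now I would split into the three cases. If $f$ and $g$ are radial, then
\[
r_X\circ h=r_X\circ g\circ f=\varphi_g\circ r_Y\circ f=\varphi_g\circ\varphi_f\circ r_Z,
\]
so $h$ is radial with $\varphi_h=\varphi_g\circ\varphi_f$. If $f$ and $h$ are radial, fix $y\in Y$ and use surjectivity of $f$ to pick $z\in f^{-1}(y)$; then $r_Y(y)=\varphi_f(r_Z(z))$, i.e.\ $r_Z(z)=\varphi_f^{-1}(r_Y(y))$, and so
\[
r_X(g(y))=r_X(h(z))=\varphi_h(r_Z(z))=\bigl(\varphi_h\circ\varphi_f^{-1}\bigr)(r_Y(y)),
\]
which shows that $g$ is radial with $\varphi_g=\varphi_h\circ\varphi_f^{-1}$. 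The case when $g$ and $h$ are radial is symmetric: from $\varphi_g\circ r_Y\circ f=r_X\circ h=\varphi_h\circ r_Z$ one gets $r_Y\circ f=\varphi_g^{-1}\circ\varphi_h\circ r_Z$, so $f$ is radial with $\varphi_f=\varphi_g^{-1}\circ\varphi_h$. In all three cases the identity $\varphi_h=\varphi_g\circ\varphi_f$ drops out by construction.

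There is no real obstacle here; the only place where one has to be a bit careful is the invertibility of the profile functions, which is why I would state that as a separate preliminary observation before doing the three cases. Once that is in hand, the proof is a direct calculation with the defining functional equations.
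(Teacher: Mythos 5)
Your proof is correct and follows essentially the same route as the paper: both arguments rest on the invertibility of the relevant functions supplied by Lemma~\ref{interlem}(i) and the resulting two-out-of-three determination, the paper phrasing this via the interval restrictions $g_y\circ f_z=h_z$ and Lemma~\ref{critlem}(ii) while you phrase it via the equivalent functional equations for the radius functions. Your explicit preliminary observation that radial profiles are bijections of $[0,1)$ is exactly the point the paper's one-line proof relies on.
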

\begin{proof}
If $z\in Z$ is a point of type 1, $y=f(z)$ and $x=g(y)$ then $g_y\circ f_z=h_z$. Since the functions $f_z,g_y,h_z$ are invertible by Lemma~\ref{interlem}(i), two of them determine the third one. The assertion now follows from Lemma~\ref{critlem}(ii).
\end{proof}

\subsubsection{Restriction onto smaller discs}
If $X$ is an open disc and $X'\subseteq X$ is an open subdisc of radius $c$ then $r_{X'}=c^{-1} r_X|_{X'}$. This obvious observation implies that the property of being radial is preserved under restrictions onto smaller discs:

\begin{lem}\label{restrdisc}
Let $X$ be an open disc with an open subdisc $X'$ of radius $c$.

(i) If $\hh\:X\to\bfR$ is a radial function of profile $\varphi(t)$ then $\hh|_{X'}$ is a radial function of profile $\varphi(ct)$.

(ii) Assume that $Y$ is an open disc and $f\:X\to Y$ is a radial morphism of profile $\varphi$. Then $Y'=f(X')$ is an open disc and the restriction morphism $f'\:X'\to Y'$ is radial with profile function $\varphi'(t)=a^{-1}\varphi(ct)$, where $a$ is the radius of $Y'$ in $Y$.
\end{lem}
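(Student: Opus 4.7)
The plan is to reduce both parts to the scaling relation $r_{X'}=c^{-1}r_X|_{X'}$ recorded in the preamble, handling (ii) by first identifying $Y'=f(X')$ as an open subdisc of $Y$ of radius $\varphi(c)$ and then computing the profile by symbol pushing.

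Part (i) is a direct substitution: since $h=\varphi\circ r_X$ on $X$ and $r_X|_{X'}=c\cdot r_{X'}$, we get $h|_{X'}(x)=\varphi(c\cdot r_{X'}(x))$, so $h|_{X'}$ is radial with profile $\varphi(ct)$.

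For Part (ii), let $x_0$ be the Gauss point of the subdisc $X'$, i.e.\ the unique type-2 point of $X$ with $r_X(x_0)=c$ lying in $\overline{X'}\setminus X'$, and set $y_0=f(x_0)$. Radiality of $f$ forces $r_Y(y_0)=\varphi(c)$, and piecewise $|k^\times|$-monomiality of $\varphi$ together with $c\in|k^\times|$ yields $\varphi(c)\in|k^\times|$, so $y_0$ is itself the Gauss point of a well-defined open subdisc $D\subset Y$ of radius $a:=\varphi(c)$. I claim $f(X')=D$. Since $X'$ is connected and $f$ is radial, $f(X')$ is a connected subset of $\{r_Y<a\}$, hence lies in a single connected component of this set; continuity of $f$ together with $x_0\in\overline{X'}$ forces $y_0\in\overline{f(X')}$, so the component in question must be $D$. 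For the reverse inclusion, restricting to the closed subdisc $\overline{X'}$ gives a finite morphism $f|_{\overline{X'}}\colon\overline{X'}\to\overline{D}$ of one-dimensional closed affinoid discs; by miracle flatness this morphism is flat, hence open, its image is closed by properness and contains $y_0$, so it must equal the connected space $\overline{D}$. Intersecting with the open set $D$ yields $f(X')=D$, and we take $Y':=D$.

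Finally, combining both scaling relations, for $x\in X'$ we have
$$a\cdot r_{Y'}(f'(x))=r_Y(f(x))=\varphi(r_X(x))=\varphi(c\cdot r_{X'}(x)),$$
so $r_{Y'}\circ f'=(a^{-1}\varphi(ct))\circ r_{X'}$, giving the profile $\varphi'(t)=a^{-1}\varphi(ct)$ as claimed. The only non-formal step is the equality $f(X')=D$; once this surjectivity is in hand, the open-disc structure of $Y'$ and the profile calculation are immediate from the scaling relation.
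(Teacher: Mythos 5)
Your core computation is exactly the paper's proof: the paper's entire argument is the one line $ar_{Y'}\circ f'=\varphi\circ(cr_{X'})$, obtained from $r_Y\circ f=\varphi\circ r_X$ and the scaling relation $r_{X'}=c^{-1}r_X|_{X'}$, with the identification of $Y'$ as an open disc taken for granted (elsewhere the paper simply invokes the fact that a finite image of an open disc is an open disc). So the part you flag as ``formal'' is the whole of the intended proof, and it is correct. The extra verification that $f(X')$ is an open subdisc of radius $\varphi(c)$ is where you run into trouble. A minor point first: a type~2 point $y_0$ with $r_Y(y_0)=a$ bounds infinitely many open subdiscs of radius $a$ (all the open residue classes of the closed disc of radius $a$ whose maximal point is $y_0$), so ``the well-defined open subdisc $D$ with Gauss point $y_0$'' does not single one out; you should simply define $D$ to be the connected component of $\{r_Y<a\}$ containing the connected set $f(X')$.

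The genuine gap is the last deduction. For ``finite morphism of closed affinoid discs'' and miracle flatness you must take $\overline{X'}$ to be the closed affinoid disc of radius $c$; but then $\overline{X'}\setminus X'$ contains, besides the Gauss point $x_0$, all the sibling open residue classes of radius $c$ inside that closed disc, and these also have $r_X<c$, hence also map into $\{r_Y<a\}$ and possibly into $D$ itself (this certainly can happen when $\deg f>1$). Consequently $f(\overline{X'})=\overline{D}$ only tells you that every $y\in D$ has a preimage in $\overline{X'}$, not in $X'$: ``intersecting with the open set $D$'' yields $D\subseteq f(\overline{X'})$, which is weaker than the needed $D\subseteq f(X')$. (If instead $\overline{X'}$ means the topological closure $X'\cup\{x_0\}$, it is not affinoid and the flatness/properness step does not apply as written.) The repair is short: since the profile $\varphi$ is strictly increasing (Lemma~\ref{interlem}), $f^{-1}(D)\subseteq\{r_X<c\}$, whose connected components are the open residue classes of radius $c$; hence $X'$ is a union of connected components of $f^{-1}(D)$ and $f|_{X'}\colon X'\to D$ is itself finite and flat, therefore open and closed with nonempty image in the connected space $D$, giving $f(X')=D$ directly.
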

\begin{proof}
The arguments are simple and similar, so we only check (ii). Since $r_Y\circ f=\varphi\circ r_X$, we have that $ar_{Y'}\circ f'=\varphi\circ cr_{X'}$ and hence $r_{Y'}\circ f'=\varphi'\circ r_{X'}$.
\end{proof}

\subsubsection{Multiplicity of radial morphisms}
We will now study local multiplicities of radial morphisms. 

\begin{lem}\label{multdisclem}
Assume that a finite morphism of open discs $f\:Y\to X$ is radial. Then $n_f(z)\in p^\bfN$ for any point $z\in Y$.
\end{lem}
\begin{proof}
Set $d=n_f(z)$. By Lemma~\ref{interlem}, $n_f(y)=d$ for any point $y\in l_z$ close enough to $z$. Choose such a point $y\in l_z\setminus\{z\}$ of type 2, and let $E=\calM(\calA)$ be the closed disc with maximal point $y$. Since $f$ is radial, for any type 1 point $a\in E$ the slope of $|f-f(a)|$ at $y$ in the direction towards $a$ equals $d$. Furthermore, multiplying $f$ by an appropriate $u\in k$ we can assume that $|f|_\calA=1$. Fix a coordinate $x$ on $E$ such that $\calA=k\{x\}$. Then $\kcirc$ parameterizes type 1 points of $E$ and $\tilcalA=\tilk[\tilx]$. If $a\in\kcirc$ then the slope of $|f-f(a)|$ in the direction of the point $x=a$ equals $\ord_\tila(\tilf-\tilf(\tila))$, where $\tilf\in\tilcalA$ is the reduction of $f$. It remains to use the easy fact that $\ord_\tila(\tilf-\tilf(\tila))=d$ for any $\tila\in\tilk$ if and only if $d=p^n$ and $\tilf=b\tilx^d+c$.
\end{proof}

\begin{rem}\label{multdiscrem}
Note we do not have to exclude the case of residual characteristic zero. In this case, $p^\bfN=\{1\}$ and the lemma asserts that a finite morphism $f$ is radial if and only if it is an isomorphism. This illustrates the advantage of using $\expchar(\tilk)$ instead of $\cha(\tilk)$.
\end{rem}

\subsection{Criteria of radiality}

\subsubsection{Finite morphisms of discs}
Let $f\:Y\to X$ be a finite map of open discs. Choose monic coordinates $t$ and $x$, then $f$ is described by sending $x$ to a series $\phi(t)=\sum c_it^i$ with $\max_i|c_i|=1$ and $|c_0|<1$. Choosing the coordinates so that $f$ respects the origins we can also achieve that $c_0=0$. The degree $d=\deg(f)$ is the minimal number with $|c_d|=1$.

\begin{lem}\label{flem}
Let $f\:Y\to X$ be a finite morphism of open or closed discs and $d=\deg(f)$.

(i) If $f$ is \'etale then either $d=1$ or $p>1$ and $d\in p\bfN$.

(ii) If $f$ is an \'etale Galois covering then $d\in p^\bfN$.
\end{lem}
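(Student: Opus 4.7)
\emph{Plan.} For (i), I would analyze the Newton polygon of the derivative $\phi'(t)=\sum_{i\ge 1}ic_it^{i-1}$. Étaleness of $f$ on $Y$ is equivalent to $\phi'$ having no zero on the open unit disc, equivalently to the lower convex hull of the Newton polygon of $\phi'$ having no segment of strictly negative slope. Now assume $d\ge 2$ and $p\nmid d$ for contradiction. Then $|dc_d|=|c_d|=1$, so the point $(d-1,0)$ lies on the Newton polygon. On the other hand, for $i<d$ we have $|c_i|<1$, whence $|ic_i|<1$ and $v(ic_i)>0$. The lower hull must therefore descend from strictly positive values at the abscissas $0,\dots,d-2$ down to $0$ at $d-1$, producing a segment of strictly negative slope and hence a root of $\phi'$ of absolute value strictly less than $1$, contradicting étaleness. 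Consequently $d=1$, or $p>1$ and $p\mid d$.

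For (ii), I would deduce the result from (i) by excluding elements of prime order $\ell\ne p$ in $G=\mathrm{Gal}(Y/X)$. Suppose, for contradiction, that such a $\sigma$ exists (which follows from Cauchy's theorem if any prime $\ell\ne p$ divides $d=|G|$). The intermediate cover $Y\to Y/\langle\sigma\rangle$ is then cyclic étale of prime-to-$p$ degree $\ell$. The key geometric input is that $Y/\langle\sigma\rangle$ is itself an open disc. Indeed, $\langle\sigma\rangle$ fixes at least one type 2 point of $Y$ by the tree structure of the disc; near such a fixed point the tame cyclic action linearizes, after a change of coordinate, to $t\mapsto\zeta t$ for a primitive $\ell$-th root of unity $\zeta$; and the local quotient $t\mapsto t^\ell$ is itself an open disc. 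Patching these local models yields that $Y/\langle\sigma\rangle$ is globally an open disc. Applying (i) to the étale morphism $Y\to Y/\langle\sigma\rangle$ of prime-to-$p$ degree $\ell$ now forces $\ell=1$, contradicting the primality of $\ell$. Hence every element of $G$ has order a power of $p$, so $G$ is a $p$-group and $d=|G|\in p^\bfN$.

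\emph{Main obstacle.} The nontrivial technical input for (ii) is the assertion that finite tame quotients of the open disc are again open discs. This rests on a non-archimedean Cartan-type linearization lemma near a fixed type 2 point together with a Kummer-theoretic identification of the local quotient; it is standard in the theory of analytic discs but not a one-line fact. By contrast, (i) is a direct Newton polygon computation using only the shape of the coefficients $c_i$ already prescribed in the setup.
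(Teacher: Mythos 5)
Part (i) of your argument is correct and is essentially the paper's proof: the paper also reduces \'etaleness to the invertibility of $\phi'$ on $Y$, i.e.\ to the inequality $|c_1|\ge|ic_i|$ for all $i$, and then observes that $d>1$ forces $|c_1|<1$ and hence $|c_i|<1$ for all $i\notin p\bfN$ (which also rules out $p=1$). Your Newton-polygon phrasing is the same computation.

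For (ii) you take a genuinely different route, and it contains the one real weak point of the proposal. The paper does not argue element by element: it takes a $p$-Sylow subgroup $H\subseteq G$, notes that $Y/H$ is an open disc simply because it is a \emph{finite image} of an open disc, and applies (i) once to the \'etale morphism $Y/H\to X$ of degree $[G:H]$ prime to $p$, forcing $Y/H=X$. You instead use Cauchy's theorem to produce $\sigma$ of prime order $\ell\ne p$ and apply (i) to $Y\to Y/\langle\sigma\rangle$; this is logically fine, but it makes you prove that $Y/\langle\sigma\rangle$ is an open disc by a Cartan-type linearization near a fixed type~2 point followed by ``patching these local models.'' That patching step is not an argument: a local normal form of the action near one fixed point does not describe the global quotient (the fixed locus need not be a single point, and nothing is said about how the local quotients assemble). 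Moreover the existence of a fixed \emph{type 2} point and the linearization lemma are themselves nontrivial claims you would have to establish. All of this machinery is unnecessary: the statement you actually need — the image of an open disc under a finite morphism is again an open disc — holds with no tameness hypothesis and is exactly what the paper invokes (it is used again later, e.g.\ in the proof of Theorem~\ref{radialth}). If you replace your linearization/patching step by that general fact, your prime-order argument closes up; but at that point the Sylow version is both shorter and avoids Cauchy's theorem, since a single application of (i) to $Y/H\to X$ kills the whole prime-to-$p$ part of $|G|$ at once.
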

\begin{proof}
If $X$ and $Y$ are open then we can choose monic coordinates $z$ and $t$ so that $f$ is given by $z=\sum_{i=1}^\infty c_it^i$, where $|c_i|<1$ for $i<d$ and $\max_i|c_i|=|c_d|=1$. It follows easily that for any $r<1$ and close enough to 1 the preimage $f^{-1}(X_r)$ of the closed disc $X_r=X\{|z|\le r\}$ is the closed disc $Y_{r^{1/d}}=Y\{|t|\le r^{1/d}\}$. Thus, it suffices to prove the lemma for the finite covering $Y_{r^{1/d}}\to X_r$ of degree $d$, and we assume in the sequel that the discs are closed unit discs.

(i) This time $f$ is given by a series $\phi(t)=\sum_{i=1}^\infty c_it^i\in k\{t\}$ with $|c_i|<1$ for $i>d$ and $\max_i|c_i|=|c_d|=1$. The \'etaleness of $f$ means that $\phi'(t)=\sum_{i=1}^\infty ic_i t^{i-1}$ is invertible on $Y$, and this happens if and only if $|c_1|>|ic_i|$ for all $i>1$. If $d>1$ then $1\ge |c_1|>|dc_d|\ge |d|$, and hence $p>1$ and $d\in p\bfN$.

(ii) Consider a $p$-Sylow subgroup $H$ of $\Gal(Y/X)$. The quotient $Y/H$ is a closed disc by Remark~\ref{imagerem}(i) below (we postpone it to Section~\ref{radsec} for expositional reasons). By part (i), the degree $|G/H|$ of the \'etale morphism $Y/H\to X$ is either 1 or divisible by $p$. The second case is impossible, hence $Y/H=X$ and $d=|H|\in p^\bfN$.
\end{proof}

\subsubsection{Galois coverings}
The degree-$p$ case is easily studied by hand.

\begin{lem}\label{plem}
Let $f\:Y\to X$ be a finite \'etale morphism of discs. If $f$ is of degree $p$ then it is radial and $n_f(y)\in\{1,p\}$ for any $y\in Y$.
\end{lem}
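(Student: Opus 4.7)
The plan is to write $f$ in coordinates and read off both statements from a Newton polygon. Choose monic coordinates on $Y$ and $X$ so that $f$ respects the origins; then $f$ is given by a series $\phi(t) = \sum_{i \ge 1} c_i t^i$ with $|c_p| = 1$, $|c_i| \le 1$ throughout, and $|c_1| < 1$ (as $d > 1$), while \'etaleness of $f$ yields $|c_1| \ge |i c_i|$ for every $i \ge 2$ as in the proof of Lemma~\ref{flem}(i).

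For a rigid point $y$ of coordinate $a$, Taylor-expanding $\phi(a + t') - \phi(a) = \sum_{j \ge 1} c_j^{(a)} (t')^j$ and arguing as in Lemma~\ref{interlem} gives $f_y(r) = \max_j |c_j^{(a)}| r^j$. I would then verify the coefficient estimates $|c_1^{(a)}| = |c_1|$, $|c_p^{(a)}| = 1$, $|c_j^{(a)}| \le |c_1|$ for $1 < j < p$, and $|c_j^{(a)}| \le 1$ for $j > p$, all uniformly in $a$; the invariance of $|c_1^{(a)}|$ and $|c_p^{(a)}|$ follows from the \'etale inequality combined with $|p| \le |c_1|$, while the intermediate bounds use $|\binom{p}{j}| \le |p|$ for $0 < j < p$ together with $|i| = 1$ for $1 < i < p$ and a short check handling the contributions of the higher $c_i$ via $|\binom{i}{j}| \le |i|$.

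With these bounds in hand, the upper convex hull of the set $\{(j, \log|c_j^{(a)}|)\}_{j \ge 1}$ has precisely the two vertices $(1, \log|c_1|)$ and $(p, 0)$: for $1 < j < p$ the point sits at height $\le \log|c_1|$, which is strictly below the value $\log|c_1|(p-j)/(p-1)$ of the joining segment at abscissa $j$; and for $j > p$ the point sits at nonpositive height, hence below any positively sloped extension through $(p, 0)$. Setting $r^\star := |c_1|^{1/(p-1)}$, one reads off $f_y(r) = |c_1| r$ on $[0, r^\star)$ and $f_y(r) = r^p$ on $[r^\star, 1)$, with the multiplicity on $l_y$ equal to $1$ on the first piece and $p$ on the second. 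Since this description is independent of the rigid point $y$, Lemma~\ref{critlem}(ii) yields that $f$ is radial with the prescribed profile, and Lemma~\ref{multlem} promotes $n_f \in \{1, p\}$ from the upward intervals to all of $Y$.

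The main technical point is the uniform-in-$a$ coefficient analysis; the Newton polygon reading and the invocation of the earlier lemmas are then routine.
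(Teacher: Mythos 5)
Your proof is correct and follows essentially the same route as the paper: write $f$ as a power series, show the relevant coefficient inequalities are preserved under translation of the base point, and read off from the Newton polygon that the only dominant terms are the linear and the $p$-th one, with the break at $|c_1|^{1/(p-1)}$. You merely make explicit (via the binomial-coefficient estimates) the "direct inspection" that the paper leaves to the reader.
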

\begin{proof}
We can assume that $f$ is given by a series $\phi(t)=\sum_{i=1}^\infty c_it^i$. Note that $\phi$ satisfies the following condition: (*) $|c_1|>|c_i|$ when $(p,i)=1$, $|c_1|>|pc_p|$, and $|c_p|=1>|c_i|$ for any $i<p$. This condition implies that on the upward interval $l_O$ starting at the origin $O\in Y$, the dominant term of $\phi$ is either $c_1t$ or $c_pt^p$ and the radius of the breaking point satisfies $|c_1|r=|c_p|r^p=r^p$, and so $r=|c_1|^{1/(p-1)}$. Moreover, (*) is invariant under translations of the disc because, by a direct inspection, $\phi(t+b)$ satisfies (*) for any $b\in k$ with $|b|<1$. Therefore, $n_f(z)=1$ if $r(z)<|c_1|^{1/(p-1)}$ and $n_f(z)=p$ otherwise. It remains to use Lemma~\ref{multlem}.
\end{proof}

\begin{cor}\label{Gcor}
If $f\:Y\to X$ is an \'etale Galois covering of degree $d$ of an open disc by an open disc, then the morphism $f$ is radial, $d=p^m$  and $n_f(y)|p^m$ for any $y\in Y$.
\end{cor}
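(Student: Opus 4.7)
The plan is to reduce the general Galois case to the degree-$p$ case of Lemma~\ref{plem} by filtering the Galois group, and then assemble the pieces using Lemma~\ref{composlem}.

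First, by Lemma~\ref{flem}(ii) we already have $d = p^m$, so $G = \Gal(Y/X)$ is a $p$-group of order $p^m$. Since every $p$-group admits a chief series with quotients of order $p$, pick a chain of normal subgroups
\[
1 = G_0 \triangleleft G_1 \triangleleft \cdots \triangleleft G_m = G, \qquad |G_i/G_{i-1}| = p,
\]
and set $Y_i = Y/G_i$, so $Y_0 = Y$ and $Y_m = X$. As used already in the proof of Lemma~\ref{flem}(ii), each $Y_i$ is a finite image of an open disc and is therefore itself an open disc. The induced morphism $f_i \colon Y_{i-1} \to Y_i$ is then a Galois covering of open discs of degree $p$.

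By Lemma~\ref{plem}, each $f_i$ is radial with multiplicity function taking values in $\{1,p\}$. Applying Lemma~\ref{composlem} inductively to the tower $Y = Y_0 \to Y_1 \to \cdots \to Y_m = X$ shows that the composition $f = f_m \circ \cdots \circ f_1$ is radial, with profile $\varphi_f = \varphi_{f_m} \circ \cdots \circ \varphi_{f_1}$.

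It remains to bound the multiplicity. The multiplicity function is multiplicative along compositions (immediate from the definition in~\ref{multsec}, since both the ramification index $e$ and the residue degree $[\calH(y):\calH(x)]$ are multiplicative), so for any $y \in Y$ with images $y_i \in Y_i$ we have $n_f(y) = \prod_{i=1}^m n_{f_i}(y_{i-1})$. Each factor lies in $\{1,p\}$, so $n_f(y)$ is a power of $p$ dividing $p^m$, as required. The only step where anything beyond the cited lemmas is invoked is the existence of the tower, i.e.\ the fact that intermediate quotients of an open disc by a subgroup of $G$ remain open discs; this is the same fact already used in Lemma~\ref{flem}(ii) and is the sole nontrivial input beyond the filtration of the $p$-group $G$.
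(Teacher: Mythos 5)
Your proof is correct and follows essentially the same route as the paper: both arguments use Lemma~\ref{flem}(ii) to see that $G$ is a $p$-group, factor $f$ through a tower of degree-$p$ \'etale coverings of open discs via a chief series, and conclude radiality from Lemmas~\ref{plem} and \ref{composlem}, with the multiplicity bound coming from multiplicativity of $n_f$ in compositions. Your version merely makes explicit two points the paper leaves implicit (the chief series and the fact that intermediate quotients remain open discs).
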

\begin{proof}
Note that $d=p^m$ by Lemma~\ref{flem}(ii) and hence $G=\Gal(Y/X)$ is a $p$-group. Thus, $G$ is solvable and we can factor $f$ into a tower of \'etale coverings of degree $p$. The latter are radial by Lemma~\ref{plem}, hence $f$ is radial by Lemma~\ref{composlem}. The last claim follows from Lemma~\ref{multdisclem}.
\end{proof}

\subsection{Non-radial examples}\label{nonradsec}
After proving affirmative results about radiality, let us discuss the limitations. For this we will construct a few examples of non-radial finite \'etale morphisms $f\:Y\to X$. If $p=1$ then any finite \'etale $f$ is an isomorphism, so we will assume that $p>1$. In this case we will see that $\deg(f)$ can be any number $mp$ with $m>1$.

\subsubsection{A framework}
We will describe a polynomial $\phi(t)$ that defines $f\:Y\to X$. In particular, the derivative $\phi'$ is a unit on $Y$. In all cases, we will exhibit a point $z$ in the interior of the upward interval $l_O$ starting at the origin $O\in Y$ such that $d=n_f(z)\notin p^\bfN$, so $f$ is not radial by Lemma~\ref{multdisclem}.

\subsubsection{Degree $mp$}
Take $\phi=t^{mp}+c_1t$ with $|mp|<|c_1|<1$. Then $\phi'=mpt^{mp-1}+c_1$ and the free term is dominant everywhere on $Y$. So, $\phi'$ is a unit. Note that $n_f$ takes the values 1 and $mp$ on the interval $l_O$. In particular, $f$ is not radial whenever $m\notin p^\bfN$.

In next examples we assume for simplicity that $\cha(k)=p$. The interested reader can easily adjust them to the mixed characteristic case by imposing inequalities analogous to the inequality $|mp|<|c_1|$ above. In addition, we assume that $p\neq 2$.

\subsubsection{Degree $p^2$}
Take $\phi=t^{p^2}+c_{2p}t^{2p}+c_1t$ such that $|c_{2p}|<1$ and $$r_1=|c_1/c_{2p}|^{1/(2p-1)}<r_2=|c_{2p}|^{1/(p^2-2p)}.$$ Then $n_f$ takes the values $1,2p,p^2$ on $l_O$ with break points $r_i$. In particular, $f$ is not radial.

\subsubsection{Split points form a radial set}\label{splitsec}
Finally, choose $\phi=t^{2p^2}+c_pt^p+c_1t$ with $|c_p|<1$ and $$r_1=|c_1/c_{p}|^{1/(p-1)}<|c_{p}|^{1/(p^2-p)}<r_2=|c_{p}|^{1/(2p^2-p)}.$$ In particular, $n_f$ takes the values $1,p,2p^2$ on $l_O$ with breaks at $r_i$, and the value $2p^2$ guarantees that $f$ is not radial. On the other hand, for any $a\in k$ with $|a|<1$ we have that $$\phi(t+a)-\phi(a)=t^{2p^2}+2a^{p^2}t^{p^2}+c_pt^p+c_1t.$$ Since $r_1<|c_{p}/2a^{p^2}|^{1/(p^2-p)}$, the linear term of $\phi(t+a)-\phi(a)$ becomes dominant at the radius $r_1$. Thus, $n_f(z)=1$ if and only if $r(z)<r_1$. In other words, the set of non-split points $N_{f,>1}$ is radial, i.e. consists of all points whose radius exceeds a fixed threshold. Since $f$ is not radial, $n_f$ is not radial and hence some set $N_{f,>d}$ is not radial. In fact, $n_f$ takes the values $1,p,p^2,2p^2$ but already $N_{f,>p}$ is not radial.

\section{Radialization theorems}\label{radsec}

\subsection{Normal coverings}
In this section we fix our terminology about Galois and normal coverings; the material is pretty standard.

\subsubsection{Galois coverings}
Given a finite morphism of nice compact curves $f\:Y\to X$ we will also say that $Y$ or $f$ is a {\em finite covering} of $X$. We say that $f$ is a {\em ramified Galois covering} if the cardinality of $\Aut_X(Y)$ equals the degree of $f$. The word ``ramified'' means that $f$ may have ramification but does not have to. {\em Galois covering} always means \'etale Galois covering. By {\em Galois closure} of a finite covering $Y\to X$ we mean the minimal ramified Galois covering (if it exists) $Z\to X$ that factors through $Y$.

\begin{lem}\label{galcloslem}
Any finite generically \'etale covering of nice compact curves $f\:Y\to X$ possesses a Galois closure $Z\to X$. Moreover, $Z$ can be realized as the normalization of an irreducible component of $(Y/X)^d=Y\times_XY\times_X\dots\times_XY$, the $d$-fold fibred product where $d=\deg(f)$.
\end{lem}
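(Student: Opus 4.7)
The plan is to first construct the Galois closure using function field theory and normalization, and then to realize it as an irreducible component of the $d$-fold fibred product.

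First, I would pass to function fields. Since $Y$ and $X$ are nice (hence connected and quasi-smooth, in particular reduced and irreducible), each admits a well-defined function field: $K=K(X)$ and $M=K(Y)$. The assumption that $f$ is finite of degree $d$ and generically \'etale implies that $M/K$ is a finite separable field extension of degree $d$. Using the standard correspondence between nice compact $k$-analytic curves and their function fields on the level of finite morphisms (normalization of a nice compact curve in any finite separable extension of its function field yields a nice compact curve), I can translate the existence problem into pure field theory.

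Second, I would construct $Z$. Let $L$ be the field-theoretic Galois closure of $M/K$; it is the smallest subfield of a separable closure of $K$ that contains $M$ and is Galois over $K$. Define $Z$ as the normalization of $X$ in $L$. Then $Z$ is a nice compact curve and the induced map $Z\to X$ is finite and generically \'etale, factoring through $Y$. The Galois action of $G=\Gal(L/K)$ on $L$ induces an action of $G$ on $Z$ over $X$, and $|G|=[L:K]=\deg(Z\to X)$, so $Z\to X$ is a ramified Galois covering in the sense of the paper. For minimality, if $Z'\to X$ is any ramified Galois covering factoring through $Y$, then $K(Z')/K$ is a Galois field extension of degree $\deg(Z'/X)=|\Aut_X(Z')|$ containing $M$; by the universal property of the Galois closure, $L\hookrightarrow K(Z')$, which translates back via normalization to a factorization $Z'\to Z\to X$.

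Third, I would realize $Z$ as a component of $(Y/X)^d$. At the generic point of $X$, the fibred product is $\Spec(A)$ with $A=M\otimes_K M\otimes_K\cdots\otimes_K M$ ($d$ factors). Let $\sigma_1,\dots,\sigma_d\:M\hookrightarrow L$ be the $d$ distinct $K$-embeddings (their number is $d$ because $M/K$ is separable of degree $d$). They combine to a $K$-algebra homomorphism
\[
\Phi\:A\to L,\qquad m_1\otimes\cdots\otimes m_d\;\longmapsto\; \sigma_1(m_1)\cdots\sigma_d(m_d).
\]
The image of $\Phi$ is a subring of $L$ containing every $\sigma_i(M)$, and since $L$ is the compositum of these subfields, $\Phi$ is surjective. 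Thus $L\cong A/\mathfrak{p}$ for some minimal prime $\mathfrak{p}\subset A$, which corresponds to an irreducible component $W$ of $(Y/X)^d$ with function field $L$. The normalization of $W$ is then precisely the normalization of $X$ in $L$, that is, $Z$.

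The main obstacle is not the field-theoretic argument, which is essentially classical, but rather the bookkeeping needed to pass cleanly between the Berkovich-analytic and the field-theoretic pictures: one must know that normalization of a nice compact curve in a finite separable function field extension indeed exists within the category of nice compact curves, that connectedness is preserved (which follows from working with a single irreducible component and using that the normalization of an integral space is connected), and that the resulting morphism is finite and generically \'etale of the expected degree. Once this translation is in place, the rest is essentially a direct application of Galois theory.
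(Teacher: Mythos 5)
Your argument is correct, but it takes a genuinely different route from the paper's. The paper punctures $X$ at finitely many rigid points to reduce to a finite \'etale covering $Y'\to X'$, invokes the standard fact that the Galois closure of a finite \'etale covering exists and is a connected component of $(Y'/X')^d$, and then returns to the compact picture by taking the closure of that component in $(Y/X)^d$ and normalizing; both the Galois property and the minimality are then deduced from a single observation, namely that any $X$-morphism between the punctured curves extends uniquely to one between the compact curves. You instead argue entirely through function fields: form the field-theoretic Galois closure $L$ of $K(Y)/K(X)$, define $Z$ as the normalization of $X$ in $L$, and locate the relevant irreducible component via the explicit surjection $\Phi\:M\otimes_K\cdots\otimes_K M\to L$ built from the $d$ embeddings. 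The two arguments rest on the same underlying equivalence between finite generically \'etale coverings of a nice compact curve and finite separable extensions of its function field; your version has the advantage of making the component of $(Y/X)^d$ and the minimality statement completely explicit (the step $|\Aut_X(Z')|=\deg(Z'/X)$ forcing $K(Z')/K$ to be Galois, hence $L\into K(Z')$, is fine). The cost is that the ``bookkeeping'' you defer contains the one genuinely non-formal point: connectedness of the normalization of $X$ in $L$. This is not a local statement --- over an affinoid subdomain $U\subseteq X$ the algebra $L\otimes_{K(X)}\Frac(\calO(U))$ can split into a product of fields, so the local normalizations are disconnected --- and the cleanest way to see global connectedness is essentially the paper's: the normalization contains a connected finite \'etale covering of $X'$ as the complement of a finite set of points, and no component of a curve can be contained in a finite set. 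Provided you supply that point, your proof is complete.
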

\begin{proof}
Removing a finite set of points of type 1 from $X$ and removing their preimages from $Y$ we obtain a finite \'etale morphism $f'\:Y'\to X'$. In this case it is standard that the Galois closure of $f'$ exists and is realized as a connected component $Z'$ of $(Y'/X')^d$. Let $Z$ be the normalization of the closure of $Z'$ in $(Y/X)^d$. Then $Z$ is a nice compact curve and $g\:Z\to X$ is a finite covering. The fact that $g$ is Galois and minimal follows from the following simple claim: if nice compact curves $Z$ and $T$ are finite coverings of $X$ and $Z'\subseteq Z$, $T'\subseteq T$ are the preimages of $X'$ then any $X$-morphism $g'\:T'\to Z'$ extends uniquely to an $X$-morphism $g\:T\to Z$. To prove this claim, consider the graph $\Gamma'\subset T'\times_XZ'$ of $g'$ and let $\Gamma$ be its closure in $T\times_XZ$. (It is an irreducible component of $T\times_XZ$ since $\Gamma'$ is an irreducible component of $T'\times_XZ'$.) The projection $p\:\Gamma\to T$ restricts to the isomorphism $\Gamma'\to T'$, and using that $T$ is a normal curve we obtain that $p$ is an isomorphism. Thus $T\toisom\Gamma\to Z$ is the extension of $g'$.
\end{proof}

\subsubsection{Radicial coverings}
We say that a finite morphism of nice compact curves $f\:Y\to X$ is {\em radicial} if it is a universal homeomorphism. A typical example is the $n$th power of the geometric Frobenius morphism $F^nX\to X$, which is glued from the morphisms of the form $\calM(\calA)\to\calM(\calA^{p^n})$, where $p=\cha(k)>0$. In fact, they exhaust all radicial morphisms between nice compact curves. Moreover, we have the following lemma:

\begin{lem}\label{radiclem}
Any finite morphism of connected quasi-smooth $k$-analytic curves $Y\to X$ factors uniquely as $Y=F^nZ\to Z\to X$, where $Z\to X$ is a generically \'etale finite covering.
\end{lem}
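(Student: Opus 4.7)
If $\cha(k)=0$, any finite morphism of curves is generically \'etale, so the conclusion holds trivially with $n=0$ and $Z=Y$. Assume henceforth $p=\cha(k)>0$.

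The key algebraic input is an analysis of the function field extension $L/K$, where $K=k(X)$ and $L=k(Y)$. Let $L_s$ be the separable closure of $K$ in $L$, so that $L/L_s$ is purely inseparable of some degree $p^n$. Since $k$ is algebraically closed (hence perfect) and $L$ has transcendence degree $1$ over $k$, iterating $[L:L^p]=p$ gives $[L:L^{p^n}]=p^n$. On the other hand, every $\alpha\in L$ satisfies $\alpha^{p^n}\in L_s$, whence $L^{p^n}\subseteq L_s$; comparing $[L:L^{p^n}]=p^n=[L:L_s]$ then forces the equality $L_s=L^{p^n}$.

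To globalize this, choose an affinoid cover $X=\bigcup U_i$ with $U_i=\calM(\calA_i)$ and $f^{-1}(U_i)=\calM(\calB_i)$, and set $\calC_i:=\calB_i^{p^n}\subseteq\calB_i$. The $n$-th iterated Frobenius is a ring isomorphism $\calB_i\xrightarrow\sim\calC_i$, and transporting the $k$-affinoid structure through this isomorphism (possible because $k$ is perfect) makes $\calC_i$ a $k$-affinoid algebra; the inclusion $\calC_i\hookrightarrow\calB_i$ then agrees with the $n$-th Frobenius morphism $F^n\calM(\calC_i)\to\calM(\calC_i)$ in the convention of the paper. The containment $\calA_i\subseteq\calC_i$ holds because any $a\in\calA_i\subseteq L_s=L^{p^n}$ equals $b^{p^n}$ for some $b\in L=\Frac(\calB_i)$, and $b$ is integral over $\calB_i$ via the monic relation $b^{p^n}-a=0$; normality of $\calB_i$, coming from quasi-smoothness of $Y$, then gives $b\in\calB_i$, whence $a\in\calC_i$. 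Gluing the $\calM(\calC_i)$ and the maps $\calM(\calC_i)\to\calM(\calA_i)$ yields a finite morphism $Z\to X$ of connected quasi-smooth $k$-analytic curves with $k(Z)=L_s$, a factorization $Y=F^nZ\to Z\to X$, and generic \'etaleness of $Z\to X$ from separability of $L_s/K$.

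The main technical point is verifying that $\calC_i=\calB_i^{p^n}$, a priori only a subring of $\calB_i$, genuinely carries the structure of a $k$-affinoid algebra and that the local spectra glue into a nice analytic curve. This is resolved by the Frobenius isomorphism $\calB_i\cong\calC_i$ (as rings, with the target's $k$-structure twisted by the Frobenius of $k$), which transfers every analytic-geometric property of $\calB_i$ to $\calC_i$; connectedness and quasi-smoothness of $Z$ then follow from those of $Y$ because $Y\to Z$ is a universal homeomorphism.
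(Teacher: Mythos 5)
Your overall strategy --- identify the separable closure $L_s$ with $L^{p^n}$ and peel off all $n$ Frobenius twists at once, using normality of $\calB_i$ to get $\calA_i\subseteq\calB_i^{p^n}$ --- is a legitimate variant of the paper's argument, which instead proceeds one Frobenius at a time by induction on the degree. But there is a genuine gap at the key algebraic step. You justify $[L:L^p]=p$ by asserting that $L$ has transcendence degree $1$ over $k$. That is false in this setting: the lemma concerns $k$-analytic curves, not algebraic ones, so the relevant field is (locally) $L=\Frac(\calB_i)$ for an affinoid algebra $\calB_i$, and already $\Frac(k\{t\})$ has infinite transcendence degree over $k$ (it contains plenty of power series transcendental over $k(t)$). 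The formula ``$[L:L^p]=p^{\mathrm{tr.deg}}$'' only holds for finitely generated extensions of a perfect field, so your premise does not deliver the conclusion. The conclusion itself is true, but it needs the paper's argument: by Noether normalization $\calB_i$ is finite over $k\{t\}$, one checks directly that $k\{t\}$ is free of rank $p$ over $k\{t\}^p=k\{t^p\}$ (so $\Frac(k\{t\})$ has $p$-rank $1$), and the $p$-rank of a field is preserved under finite extensions. Without this, the identity $L_s=L^{p^n}$ --- on which your entire construction of $\calC_i$ rests --- is unproven. Relatedly, the global object ``$k(Y)$'' is not well defined for a general quasi-smooth $k$-analytic curve; you must work with $\Frac(\calB_i)$ on an affinoid cover, and then $f^{-1}(U_i)$ need not be connected, so $\calB_i$ need not be a domain and you must argue that the inseparable degree $p^n$ is the same on every component of every chart. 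The paper sidesteps this by first establishing the dichotomy (generically \'etale versus nowhere \'etale, via Zariski-closedness of the non-smooth locus and inseparability of $\kappa(y)/\kappa(x)$ at non-rigid points) and then inducting, which keeps everything $G$-local.

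The rest of your argument is sound: the integrality trick $b^{p^n}-a=0$ plus normality of $\calB_i$ correctly yields $\calA_i\subseteq\calB_i^{p^n}$ once $K\subseteq L^{p^n}$ is known, and the Frobenius-twist description of the $k$-affinoid structure on $\calC_i$ matches the paper's convention for $F^nZ\to Z$. If you repair the $p$-rank computation as above and organize the argument affinoid-locally (or simply run the paper's one-step induction), your proof goes through.
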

\begin{proof}
The non-smooth locus of $f\:Y\to X$ is Zariski closed, so either $f$ is generically \'etale and there is nothing to prove or $f$ is nowhere \'etale. In the second case it suffices to prove that $f$ factors uniquely as $Y=FT\to T\to X$, because then induction on the degree of $f$ completes the argument. Let us prove the latter claim. Since it is $G$-local on $X$, we can assume that $X=\calM(\calA)$ is affinoid. Then $Y=\calM(\calB)$ is affinoid too and our claim reduces to showing that $\calA\subseteq\calB^p$. Furthermore, set $K=\Frac(\calA)$ and $L=\Frac(\calB)$. Since $\calB/\calA$ is a finite extension of Dedekind domains it suffices to show that $K\subseteq L^p$.

For any point $y\in Y$ not of type 1 with $x=f(y)$ we have that $\kappa(y)$ is not \'etale over $\kappa(x)$. Thus the extension $\kappa(y)/\kappa(x)$ is inseparable, and since $\kappa(y)$ is a factor of $L\otimes_K\kappa(x)$ we obtain that $L/K$ is inseparable. Thus, it suffices to show that the $p$-rank of $L$ is 1, i.e. $[L:L^p]=p$. By noether normalization, $Y$ is finite over a disc, hence $\calB$ is finite over $\calC=k\{t\}$. Obviously, $\calC$ is of rank $p$ over $\calC^p=k\{x^p\}$, and hence $\Frac(\calC)$ is of $p$-rank 1. It remains to use that $L/\Frac(\calC)$ is finite and the $p$-rank of a field is preserved by finite extensions.
\end{proof}

%\subsubsection{Separable degree}
%By the {\em separable degree} of a finite morphism $Y\to X$ we mean the degree of $Z\to X$ in the decomposition $Y=F^nZ\to Z\to X$ from Lemma~\ref{radiclem}.

\subsubsection{Normal coverings}\label{norcovsec}
By a {\em normal covering} of nice compact curves $f\:Y\to X$ we mean a finite morphism which is a composition of a radicial morphism and a ramified Galois covering. Normal closure is defined analogously to Galois closure.

\begin{lem}
Any finite covering of nice compact curves $f\:Y\to X$ possesses a normal closure $Y'\to X$. In fact, $Y'=F^nZ'$, where $Y=F^nZ\to Z\to X$ is the decomposition from Lemma~\ref{radiclem} and $Z'\to X$ is the Galois closure of $Z\to X$.
\end{lem}
\begin{proof}
By definition, $Y'\to X$ is a normal covering. Any normal covering of $X$ dominating $Z$ factors through $Z'$, and using that $Y'=Z'\times_ZY$, we obtain that $Y'\to X$ is the minimal normal covering dominating $Y$.
\end{proof}

\subsection{Skeletons}

\subsubsection{Skeletons of curves}
We adopt from \cite[3.5.1]{CTT} the definition of a skeleton $\Gamma$ of a nice compact curve $X$. In particular, vertices of $\Gamma$ are of types 1 and 2. It is known to experts that the following result is a consequence of the stable reduction, but it is hard to find this in a published literature. We will refer to a book project of Antoine Ducros, and then, for completeness, briefly discuss the main idea of the proof.

\begin{lem}\label{imageskellem}
Assume that $f\:Y\to X$ is a finite morphism between nice compact curves. If $\Gamma_Y$ is a skeleton of $Y$ then $f(\Gamma_Y)$ is a skeleton of $X$.
\end{lem}
\begin{proof}
One should check that any connected component of $X\setminus f(\Gamma_Y)$ is an open disc. This follows from \cite[Lemma~6.2.4]{curvesbook}.
\end{proof}

\begin{rem}\label{imagerem}
(i) In fact, the same argument as in \cite{curvesbook} proves the following slightly stronger fact: if $D$ is a closed (resp. open) disc in $Y$ then either $f(D)=\bfP^1_k=X$ or $f(D)$ is a closed (resp. open) disc too. Here is the main idea. An open disc is a filtered union of closed ones, so it suffices to consider the case when $D$ is closed, and then $E=f(D)$ is a nice compact curve. It is easy to see that $E$ contains neither loops nor positive genus points (i.e. points $x$ of type 2 with $\wHx/\tilk$ of a positive genus) because otherwise their preimage in $D$ would contain a loop or a positive genus point. In addition, the boundary of $E$ is contained in the image of the boundary of $D$, hence it is either empty or a single point. It follows easily from the stable reduction theorem that in the first case $E=\bfP^1_k$ and in the second case $E$ is a disc.

(ii) Here is another approach. The above result is equivalent to the following one: if $X$ is a nice compact curve that possesses a minimal skeleton $\Gamma$ (i.e., $X\neq\bfP^1_k$) and $f\:D\to X$ is a morphism from an open disc, then $f(D)\cap\Gamma=\emptyset$. When $X$ is proper, the latter is recorded in \cite[Theorem~4.5.3]{berbook}. The non-proper case can be reduced to this using that $X$ can be embedded into a nice proper curve $\oX$ by attaching open discs to the boundary.
\end{rem}

\subsubsection{Skeletons of finite coverings}
Let $f\:Y\to X$ be a finite morphism of nice compact curves. We say that a pair of skeletons $(\Gamma_Y,\Gamma_X)$ is {\em compatible} if $f^{-1}(\Gamma_X)=\Gamma_Y$ and $f^{-1}(\Gamma^0_X)=\Gamma^0_Y$, where $\Gamma_X^0$ and $\Gamma_Y^0$ are the sets of vertices. If $f$ is generically \'etale then a skeleton of $f$ is defined in \cite[3.5.9]{CTT} as a compatible pair $\Gamma=(\Gamma_Y,\Gamma_X)$ such that $\Gamma_Y$ contains all ramification points.

If $f$ is not generically \'etale then this definition makes no sense, so we adjust it as follows. Let $Y=F^nZ\to Z\to X$ be the factorization of $f$ with a generically \'etale $g\:Z\to X$. Then by a {\em skeleton} of $f$ we mean any compatible pair of skeletons $(\Gamma_Y,\Gamma_X)$ such that the image of $\Gamma_Y$ in $Z$ contains all ramification points of $g$. The latter condition in fact means that $\Gamma_Y$ contains all points of type 1 where the multiplicity is not locally constant. Also, it is easy to see that $(\Gamma_Y,\Gamma_X)$ is a skeleton of $f$ if and only if $(g^{-1}(\Gamma_X),\Gamma_X)$ is a skeleton of $g$. 

From Lemma~\ref{imageskellem} we immediately obtain the following result.

\begin{lem}\label{translem}
Assume that $f\:Z\to Y$ and $g\:Y\to X$ are finite morphisms of nice compact curves, and $(\Gamma_Z,\Gamma_X)$ is a skeleton of the composition $Z\to X$. Set $\Gamma_Y=f(\Gamma_Z)$. Then $(\Gamma_Z,\Gamma_Y)$ is a skeleton of $f$ and $(\Gamma_Y,\Gamma_X)$ is a skeleton of $g$.
\end{lem}

\subsection{Radial morphisms}

\subsubsection{The retraction $q_\Gamma$}
Assume that $X$ is a nice compact curve with a skeleton $\Gamma$. Since $X\setminus\Gamma$ is a disjoint union of open discs, for any point $x\in X$ there exists a unique interval $l_x=[x,q_\Gamma(x)]$ such that $l_x\cap\Gamma=\{q_\Gamma(x)\}$. (The interval degenerates to a point when $x\in\Gamma$.) Note that $q_\Gamma\:X\to\Gamma$ is the standard retraction of $X$ onto $\Gamma$. If $x\in\Gamma$ then the set $q_\Gamma^{-1}(x)\setminus\{x\}$ is empty if $x$ is of type 3 or 1 and is a disjoint union of open discs if $x$ is of type 2.

\subsubsection{The radius function $r_\Gamma$}
The skeleton $\Gamma$ defines a natural radius function $r_\Gamma\:X\to[0,1]$ as follows. For a point $x\in X$ set $r_\Gamma(x)=\exp(-l)$, where $l$ is the logarithmic length of $l_x$. In particular, $r_\Gamma(x)=0$ if and only if $x$ is a point of type 1, and, more generally, $r_\Gamma$ measures the inverse exponential distance of points of $X$ from $\Gamma$.

\begin{rem}
Any connected component $D$ of $X\setminus\Gamma$ is an open disc and the restriction of $r_\Gamma$ onto $D$ is the usual radius function of $D$.
\end{rem}

\subsubsection{Radial sets}
Given a map $\hh\:\Gamma\to\bfR$ we call $$C(\Gamma,\hh)=\{x\in X|\ r_\Gamma(x)\ge\hh(q_\Gamma(x))\}$$ the {\em radial subset} of $X$ with {\em center} $\Gamma$ and radius $\hh$. Also, we say that $C(\Gamma,\hh)$ is {\em $\Gamma$-radial}.

\subsubsection{Radial functions}
A function $\hh\:X\to\bfR$ is called {\em $\Gamma$-radial} if for any connected component $D$ of $X\setminus\Gamma$, the restriction $\hh|_D$ is radial and its profile $\varphi_D$ depends only on the limit point $q(D)\in\Gamma$ of $D$ in the skeleton, say $\varphi_D=\varphi_{q(D)}$. Note that the profile function $\varphi_q\:[0,1)\to\bfR$ naturally extends to $[0,1]$ by sending $1$ to $h(q)$, and by a slight abuse of notation we will denote the extension by the same letter. The collection $\{\varphi_q\}_{q\in\Gamma^{(2)}}$ is called the {\em profile} of $\hh$, where $\Gamma^{(2)}$ denotes the set of type 2 points of $\Gamma$. Sometimes, it will be convenient to represent the profile as a single function $\varphi^{(2)}\:\Gamma^{(2)}\times[0,1]\to\bfR$. If needed, we will mention $h$ and $\Gamma$ in the notations, e.g. $\varphi_h^{(2)}$.

\subsubsection{Radial morphisms}
Assume that $f\:Y\to X$ is a finite morphism between nice compact curves and $\Gamma=(\Gamma_Y,\Gamma_X)$ is a skeleton of $f$. We say that $f$ is $\Gamma$-radial if for any connected component $E$ of $Y\setminus\Gamma_Y$, the restriction $E\to D=f(E)$ is radial and its profile $\varphi_E$ depends only on the limit point $q(E)\in\Gamma_Y$ of $E$ in the skeleton, say $\varphi_E=\varphi_{q(E)}$. Each function $\varphi_q\:[0,1)\to[0,1)$ is a monotonic bijection, so it extends to the whole $[0,1]$ by continuity. The extension will be denoted by the same letter and the functions $\varphi_q$ give rise to a single profile function $\varphi^{(2)}\:\Gamma_Y^{(2)}\times[0,1]\to\Gamma_X^{(2)}\times[0,1]$. Again, we will sometimes write $\varphi^{(2)}_f$ or $\varphi^{(2)}_\Gamma$.

\subsubsection{Radializing skeletons}
If a morphism $f$ is $\Gamma$-radial then we say that the skeleton $\Gamma$ {\em radializes} $f$. The same terminology will be used for subsets of $Y$ and real-valued functions on $Y$.

\begin{rem}
(i) If $h\:X\to\bfR$ is a $\Gamma$-radial function then one can only extend its profile to a map $\varphi\:\Gamma^{(2)}\times[0,1]\cup\Gamma\times\{1\}\to\bfR$ just by setting $\varphi(q,1)=h(q)$. There is no natural way to define a profile $\varphi_y$ for $y\in\Gamma$ of type 3. The situation with profiles of a radial morphism $f\:Y\to X$ is more interesting. We will later prove that $\varphi_y$ depends on $y$ in a piecewise monomial way, and hence $\varphi^{(2)}$ naturally extends to a map $\varphi\:\Gamma_Y^{(2,3)}\times[0,1]\to\Gamma_X^{(2,3)}\times[0,1]$.

(ii) Recall that radial functions and morphisms on discs were defined in terms of the radius function. In the same fashion, one can define $\Gamma$-radial functions and morphisms in terms of the map $R_\Gamma=(q_\Gamma,r_\Gamma)\:X\to\Gamma\times[0,1]$. Namely, a function $h$ is $\Gamma$-radial if and only if it factors through $R_\Gamma$, and a morphism $f\:Y\to X$ is $(\Gamma_Y,\Gamma_X)$-radial if and only if $r_{\Gamma_X}\circ f$ is a radial function on $Y$, and then $R_{\Gamma_X}\circ f=\phi_f\circ R_{\Gamma_Y}$.
\end{rem}

\subsubsection{Relation to the multiplicity function}
Results of Section~\ref{discsec} easily extend to morphisms between nice compact curves. We start with the results about the multiplicity.

\begin{theor}\label{multth}
Let $f\:Y\to X$ be a finite morphism of nice compact curves and let $\Gamma=(\Gamma_Y,\Gamma_X)$ be a skeleton of $f$. Then the following conditions are equivalent:

(i) The morphism $f$ is $\Gamma$-radial.

(ii) The multiplicity function $n_f$ is $\Gamma_Y$-radial.

(iii) The sets $N_{f,\ge d}:=\{y\in Y|\ n_f(y)\ge d\}$ are $\Gamma_Y$-radial.
\end{theor}
\begin{proof}
Equivalence of (i) and (ii) follows from Lemma~\ref{multlem}. Equivalence of (ii) and (iii) follows from the claim that $n_f$ increases on any interval $l_y$ in $Y$. To check the latter it suffices to consider a finite morphism between open discs, and then the claim was already established in the proof of Lemma~\ref{interlem}.
\end{proof}

\begin{theor}\label{multth2}
Assume that $f\:Y\to X$ is a finite $\Gamma$-radial morphism between nice compact curves, where $\Gamma=(\Gamma_Y,\Gamma_X)$ is a skeleton of $f$. Then $\Gamma_Y$ contains each set $N_{f,d}$ with $d\notin p^\bfN$.
\end{theor}
\begin{proof}
Note that on the complements of $\Gamma_Y$ and $\Gamma_X$ the morphism $f$ splits into a disjoint union of radial finite morphisms between open discs. For these morphisms $n_f$ only accepts the values from $p^\bfN$ by Lemma~\ref{multdisclem}.
\end{proof}

\subsubsection{Composition}
As in the case of discs, radial morphisms are preserved under compositions, but this time we should take the skeletons into account.

\begin{lem}\label{composnicelem}
Let $f\:Z\to Y$ and $g\:Y\to X$ be radial morphisms between nice compact curves with the composition $h\:Z\to X$. Assume that $\Gamma_f=(\Gamma_Z,\Gamma_Y)$, $\Gamma_g=(\Gamma_Y,\Gamma_X)$ and $\Gamma_h=(\Gamma_Z,\Gamma_X)$ are compatible skeletons of $f$, $g$ and $h$, respectively. If two of these skeletons are radializing then all three are radializing and $\varphi^{(2)}_h=\varphi^{(2)}_g\circ\varphi^{(2)}_f$.
\end{lem}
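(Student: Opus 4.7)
The plan is to reduce the problem to the disc-level Lemma~\ref{composlem} by restricting to connected components of $Z\setminus\Gamma_Z$, $Y\setminus\Gamma_Y$, and $X\setminus\Gamma_X$, and then to verify case by case that the profile of the third morphism depends only on the appropriate type 2 limit point.

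By the compatibility of the skeletons, for any connected component $E$ of $Z\setminus\Gamma_Z$ the images $D=f(E)$ and $C=g(D)$ are connected components of $Y\setminus\Gamma_Y$ and $X\setminus\Gamma_X$ respectively; all three are open discs and the restrictions $f|_E$, $g|_D$, $h|_E$ are finite morphisms of discs with $h|_E=g|_D\circ f|_E$. If two of $f,g,h$ are $\Gamma$-radial, their fiberwise restrictions are radial as morphisms of discs, so Lemma~\ref{composlem} supplies the radiality of the third restriction together with the disc-level relation $\varphi_{h,E}=\varphi_{g,D}\circ\varphi_{f,E}$. What remains is to show that the resulting profile of the third morphism depends only on the corresponding $q$-value.

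Two of the three cases are immediate. If $f$ and $g$ are radial, then $\varphi_{h,E}=\varphi_{g,f(q(E))}\circ\varphi_{f,q(E)}$ depends only on $q(E)$ because $q(f(E))=f(q(E))$. If $g$ and $h$ are radial and $E,E'$ share $q(E)=q(E')=z$, then $f(E)$ and $f(E')$ both limit at $f(z)$, so $\varphi_{g,f(E)}=\varphi_{g,f(E')}$ by radiality of $g$; combined with $\varphi_{h,E}=\varphi_{h,E'}$ and the invertibility of profiles from Lemma~\ref{interlem}(i), this forces $\varphi_{f,E}=\varphi_{f,E'}$.

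The subtle case is $f,h$ radial $\Rightarrow$ $g$ radial. Given $y\in\Gamma_Y^{(2)}$ and two components $D,D'$ of $Y\setminus\Gamma_Y$ with $q(D)=q(D')=y$, one must show $\varphi_{g,D}=\varphi_{g,D'}$. The idea is to realize both $D$ and $D'$ as images of branches at a single type 2 point $z\in f^{-1}(y)\cap\Gamma_Z$. This is possible because the finite surjective morphism of residue curves at $z$ over the residue curve at $y$ hits every closed point, so every branch at $y$ transverse to $\Gamma_Y$ is the image of a branch at $z$ transverse to $\Gamma_Z$. Choosing components $E,E'$ of $Z\setminus\Gamma_Z$ over $D,D'$ with $q(E)=q(E')=z$, the identities $\varphi_{g,D}\circ\varphi_{f,z}=\varphi_{h,z}=\varphi_{g,D'}\circ\varphi_{f,z}$ together with the invertibility of $\varphi_{f,z}$ force $\varphi_{g,D}=\varphi_{g,D'}$. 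This residue-curve surjectivity step is the main obstacle; once it is in hand, radiality of $g$ follows, and the composition formula $\varphi^{(2)}_h=\varphi^{(2)}_g\circ\varphi^{(2)}_f$ is immediate in all three cases from the disc-level relation.
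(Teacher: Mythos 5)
Your proposal is correct and follows the same route as the paper, whose entire proof is the one-line reduction to Lemma~\ref{composlem}; your componentwise restriction and the composition of profiles is exactly what that reduction amounts to. The extra care you take in the case ``$f,h$ radial $\Rightarrow g$ radial'' --- using surjectivity of the induced map of residue curves at $z\in f^{-1}(y)$ to realize two components $D,D'$ over the same $y$ as images of components over a single $z$, and then cancelling the invertible $\varphi_{f,z}$ --- is a genuine detail that the paper leaves implicit, and it is handled correctly.
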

\begin{proof}
This follows from Lemma~\ref{composlem}.
\end{proof}

\subsubsection{Enlarging the skeleton}
Finally, let us show that radial functions and morphisms are preserved by enlarging the skeleton.

\begin{lem}\label{enlargelem}
(i) Assume that $X$ is a nice compact curve with skeletons $\Gamma\subseteq\Gamma'$. If $\hh\:X\to\bfR$ is a $\Gamma$-radial function with profile $\{\varphi_y\}_{y\in\Gamma^{(2)}}$ then $\hh$ is $\Gamma'$-radial with profile $\{\varphi_{y'}\}_{y'\in\Gamma'^{(2)}}$, where $\varphi_{y'}(t)=\varphi_{q_\Gamma(y')}(r_\Gamma(y')t)$.

(ii) Assume that $f\:Y\to X$ is a finite morphism between nice compact curves and $\Delta\subseteq\Delta'$ are skeletons of $X$ whose preimages $\Gamma\subseteq\Gamma'$ in $Y$ are skeletons. If $f$ is $(\Gamma,\Delta)$-radial with profile $\{\varphi_y\}_{y\in\Gamma^{(2)}}$ then $f$ is also $(\Gamma',\Delta')$-radial with profile $\{\varphi_{y'}\}_{y'\in\Gamma'^{(2)}}$, where $\varphi_{y'}(t)=r_{\Delta}(x')^{-1}\varphi_{y}(r_\Gamma(y')t)$ for each $y'\in\Gamma'^{(2)}$ with $y=q_{\Gamma}(y')$ and $x'=f(y')$.
\end{lem}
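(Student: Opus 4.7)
The plan is to deduce both parts from Lemma~\ref{restrdisc}, exploiting that enlarging the skeleton amounts, on each connected component of the complement, to restricting radial data to an open subdisc. I would first set up the combinatorial dictionary: since $\Gamma\subseteq\Gamma'$, every connected component $D'$ of $X\setminus\Gamma'$ is contained in a unique component $D$ of $X\setminus\Gamma$, and is either equal to $D$ (when $\Gamma'$ adds no points inside $D$) or a proper open subdisc of $D$. In the latter case the limit point $y'=q_{\Gamma'}(D')$ lies in $\Gamma'\cap D$, and the key geometric identity---immediate from the fact that $r_\Gamma|_D$ is the monic radius function of $D$---is that the monic radius of $D'$ inside $D$ equals $r_\Gamma(y')$.

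For part (i), I would fix $y'\in\Gamma'^{(2)}$ and a component $D'\subseteq X\setminus\Gamma'$ with limit $y'$. When $D'=D$ the point $y'=q_\Gamma(y')$ already lies in $\Gamma$, so $r_\Gamma(y')=1$ and the proposed formula degenerates to $\varphi_{y'}=\varphi_{q_\Gamma(y')}$ unchanged. Otherwise, write $y=q_\Gamma(y')$ and $c=r_\Gamma(y')$; by hypothesis $h|_D$ is radial with profile $\varphi_y$, so Lemma~\ref{restrdisc}(i) gives that $h|_{D'}$ is radial with profile $\varphi_y(ct)$. Since this expression depends only on $y'$, the function $h$ is $\Gamma'$-radial with the asserted profile.

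For part (ii), I would apply the same analysis in parallel on $Y$ and $X$. Fix $y'\in\Gamma'^{(2)}$ and let $E'\subseteq Y\setminus\Gamma'$ be a component with limit $y'$; set $x'=f(y')$, let $E$ be the ambient component of $Y\setminus\Gamma$, $D=f(E)$, and $D'=f(E')$, which is a component of $X\setminus\Delta'$ with limit $x'$ by the compatibility $q_{\Delta'}\circ f=f\circ q_{\Gamma'}$ built into the skeleton hypothesis $\Gamma'=f^{-1}(\Delta')$. By the dictionary the ambient-subdisc radii are $c=r_\Gamma(y')$ and $a=r_\Delta(x')$, so Lemma~\ref{restrdisc}(ii) applied to the radial morphism $f|_E\:E\to D$ of profile $\varphi_y$ yields that $f|_{E'}\:E'\to D'$ is radial with profile $a^{-1}\varphi_y(ct)=r_\Delta(x')^{-1}\varphi_y(r_\Gamma(y')t)$, which is exactly the claim. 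The equality case $E'=E$ again collapses to $c=a=1$.

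I do not anticipate a substantive obstacle: once Lemma~\ref{restrdisc} is in hand the argument is purely combinatorial. The only point worth double-checking is that the hypothesis $\Gamma'=f^{-1}(\Delta')$ genuinely produces the required compatibility between components of $Y\setminus\Gamma'$ and their images in $X\setminus\Delta'$, and carries the subdisc-radius identifications across $f$; but this follows at once from the fact that $f$ commutes with retractions onto compatible skeletons.
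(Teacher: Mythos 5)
Your proposal is correct and follows exactly the route the paper takes: the paper's proof is the single line ``This follows from Lemma~\ref{restrdisc}'', and your write-up simply fills in the component-by-component bookkeeping (identifying the subdisc radius of a component of $X\setminus\Gamma'$ inside its ambient component of $X\setminus\Gamma$ with $r_\Gamma(y')$, and transporting this across $f$) that the paper leaves implicit.
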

\begin{proof}
This follows from Lemma~\ref{restrdisc}.
\end{proof}

\subsection{Radialization of morphisms}
Our next aim is to prove that any morphism is radial with respect to a large enough skeleton. In addition, we will see that in certain cases any skeleton is automatically radializing.

\subsubsection{Residually tame coverings}
We say that a finite morphism between nice compact curves $f\:Y\to X$ is {\em residually tame} if for any $y\in Y$ the extension $\calH(y)/\calH(x)$ is tame. (A more restrictive notion of topologically tame morphisms is introduced in \cite[3.2.3]{CTT} by requiring that $n_y$ is invertible in $\tilk$. See \cite[3.2.3]{CTT} for the motivation of this restriction.)

\begin{lem}\label{tamelem}
Assume that $f\:Y\to X$ is a finite residually tame morphism of nice compact curves and $\Gamma=(\Gamma_Y,\Gamma_X)$ is an arbitrary skeleton of $f$. Then $f$ splits outside of $\Gamma$. In particular, $f$ is $\Gamma$-radial and the associated profile $\{\varphi_y\}$ is trivial, i.e. $\varphi_y(t)=t$.
\end{lem}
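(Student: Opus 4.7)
My plan is component-by-component: I will show that for each connected component $E$ of $Y\setminus\Gamma_Y$, the restriction $f|_E$ is an isomorphism onto a component $D$ of $X\setminus\Gamma_X$. This is exactly the splitting assertion, and it makes the trivial profile immediate.

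Fix such an $E$ with root $y_0=q_{\Gamma_Y}(E)\in\Gamma_Y$. Since $\Gamma_Y=f^{-1}(\Gamma_X)$ and $\Gamma_Y$ contains every ramification point of $f$, the restriction $f|_E\:E\to D$ is a finite \'etale morphism of open discs; let $d_E$ denote its degree. Choose monic coordinates so that $f|_E$ respects origins, and write $f|_E$ as $\phi(t)=\sum_{i\ge 1}c_it^i$ with $\max_i|c_i|=1$ and $d_E=\min\{i\ge 1:|c_i|=1\}$, as in the setup of Lemma~\ref{flem}.

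By the computation in the proof of Lemma~\ref{interlem}, at a type 2 point $z\in E$ of radius $r$ one has $n_f(z)=\max\{m:|c_m|r^m=\max_i|c_i|r^i\}$. For $r$ close enough to $1$, the inequality $|c_i|<1$ for $i<d_E$ gives $|c_i|<r^{d_E-i}$ and hence $|c_i|r^i<r^{d_E}$, while $r<1$ and $i>d_E$ give $|c_i|r^i\le r^i<r^{d_E}$. Thus the maximum is uniquely attained at $m=d_E$, so $n_f(z)=d_E$. Topological tameness yields $p\nmid d_E$, and Lemma~\ref{flem}(i), which says an \'etale finite morphism of open discs has degree either $1$ or divisible by $p$, then forces $d_E=1$. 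Hence $f|_E$ is an isomorphism of open discs.

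An isomorphism of open discs preserves the monic radius function: with $d_E=1$ one checks directly that $|\phi'|\equiv 1$ on $E$, so $\phi$ preserves ultrametric distances between rigid points and therefore sends each Berkovich disc $D(a,r)$ to $D(\phi(a),r)$ of the same radius. Using $r_{\Gamma_Y}|_E=r_E$ and the analogous identity on $X$, this gives $r_{\Gamma_X}\circ f|_E=r_{\Gamma_Y}|_E$ on every $E$, i.e.\ $f$ is $\Gamma$-radial with trivial profile $\varphi_y(t)=t$ for every $y\in\Gamma_Y^{(2)}$. The only delicate step is the dominance argument identifying $n_f$ with $d_E$ at type 2 points near the root, which relies on $d_E$ being the \emph{minimal} index with $|c_{d_E}|=1$; the rest of the proof is a direct application of Lemma~\ref{flem}(i), topological tameness, and the definitions.
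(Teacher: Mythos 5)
Your proof is correct, and its skeleton matches the paper's: reduce to a finite \'etale morphism of open discs $E\to D$ and use Lemma~\ref{flem}(i) to rule out every degree other than $1$. The step that actually produces the contradiction is different, though. You compute the multiplicity $n_f(z)=d_E$ at points $z$ of radius close to $1$ on the interval from the origin (via the dominant-term formula from the proof of Lemma~\ref{interlem}) and then invoke topological tameness in the form $p\nmid n_f(z)$. The paper instead takes the maximal point $y$ of a disc of radius $r$ close to $1$ and shows by a residue-field/value-group computation that $\calH(y)/\calH(f(y))$ fails to be tame. Your route is more elementary, but it only yields the lemma under the stated hypothesis of topological tameness, whereas the paper's argument establishes the stronger fact that any topologically \emph{semi}-tame morphism (tameness of all extensions $\calH(y)/\calH(x)$, which still allows $p\mid n_y$ through the residue degree) already splits outside a skeleton. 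Two points you pass over quickly but which are worth a word: topological tameness forces $f$ to be generically \'etale (otherwise the skeleton is defined via the Frobenius factorization and $f|_E$ need not be \'etale), and the final radius-preservation step can be shortened by observing that an isomorphism of open discs carries monic coordinates to monic coordinates and hence intertwines the radius functions, avoiding the explicit $|\phi'|\equiv 1$ computation.
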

\begin{proof}
It suffices to prove that if a finite \'etale morphism of open discs $f\:E\to D$ is not an isomorphism then it is not residually tame. By Lemma~\ref{flem}(i) the degree of $f$ is divisible by $p$, hence $f$ is given by a series $\phi(t)=\sum c_it^i$ such that $|c_1|<1=\max_i|c_i|$ and $|c_i|\le |c_1|$ for $i\notin p\bfN$. Choose $r$ close enough to 1 so that all dominant terms of $\phi(t)$ are of the form $c_{pn}t^{pn}$, and let $y$ be the maximal point of the disc around the origin of radius $r$ and $x=f(y)$. Then a direct computation shows that either $\wHy/\wHx$ is inseparable or $|\calH(y)^\times|/|\calH(x)^\times|$ is divisible by $p$. In either case $\calH(y)/\calH(x)$ is not tame.
\end{proof}

\subsubsection{The different}
In \cite{CTT}, a systematic theory of the different function of a morphism $f\:Y\to X$ is developed. In the sequel, we will need a couple of basic properties of the different that we are going to recall. Given a type 2 point $y\in Y$, choose $t\in\calH(y)^\circ$ and $u\in\calH(x)^\circ$ such that $\tilt\notin\wHy^p$ and $\tilu\notin\wHx^p$, and set $\delta_f(y)=\left|\frac{du}{dt}\right|$. We claim that $\delta_f(y)$ is independent of the choice of $t$ and $u$. Indeed, it suffices to show that for any $t'\in\calH(y)^\circ$ with $\tilt'\notin\wHy^p$ we have that $\left|\frac{dt'}{dt}\right|=1$ and similarly for $u$. Since $\Omega_{\wHy/\wHx}=\Omega_{\calH(y)^\circ/\calH(x)^\circ}\otimes_{\calH(y)^\circ}\wHy$, the reduction of $\frac{dt'}{dt}$ equals $\frac{d\tilt'}{d\tilt}$, which is non-zero since both $\{d\tilt\}$ and $\{d\tilt'\}$ are bases of $\Omega_{\wHy/\tilk}$. Thus, $\left|\frac{dt'}{dt}\right|=1$ as claimed. In fact, $t$ and $u$ are tame parameters in the sense of \cite[2.1.2]{CTT} and so $\delta_f(y)$ is the different of the extension $\calH(y)/\calH(x)$ by \cite[Corollary~2.4.6(ii)]{CTT}. We leave it to the reader to check that a similar construction works for a type 3 point $y$, but this time one should take $t$ with $|t|\notin|\calH(y)^\times|^p$ and similarly for $u$, and then $\delta_f(y)=\left|\frac{du}{dt}\right|\cdot|tu^{-1}|$.

\begin{lem}\label{diflem}
Assume that $f\:Y\to X$ is a generically \'etale morphism of nice compact curves and $I\subset Y$ is an interval consisting of type 2 and 3 points. Then $\delta_f$ restricts to a piecewise $|k^\times|$-monomial function on $I$.
\end{lem}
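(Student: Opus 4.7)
The plan is to reduce the problem to a local computation on the skeleton of an open annulus, where $\delta_f$ admits an explicit expression in terms of a Laurent series.

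First I will cover $I$ by finitely many closed subintervals, each contained in the skeleton $\Sigma(A)$ of an open annulus $A\subseteq Y$ such that, after possibly shrinking, $f$ maps $A$ into an open annulus $B\subseteq X$. This uses the local structure of nice $k$-analytic curves: at every type 2 or type 3 interior point $y\in I$ the interval selects two tangent directions, and these span the skeleton of a germ of annulus at $y$; since $f$ preserves types of points and sends skeletal intervals to skeletal intervals, one may shrink the annulus to ensure $f(A)\subset B$. Compactness of $I$ then yields a finite cover.

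On a single piece choose monic coordinates $t$ on $A$ and $u$ on $B$, so that $f$ is given by a convergent Laurent series $u=\phi(t)=\sum_{i\in\bfZ}c_it^i$. At the skeletal point $y_r$ with $|t|_{y_r}=r$, one has $|\phi(t)|_{y_r}=\max_i|c_i|r^i$ and $|\phi'(t)|_{y_r}=\max_i|ic_i|r^{i-1}$. On any compact subinterval of $\Sigma(A)$ only finitely many terms are ever dominant (by convergence of $\phi$), so both functions are piecewise $|k^\times|$-monomial in $r$.

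The central step is to verify the uniform formula $\delta_f(y_r)=r\cdot|\phi'(t)|_{y_r}/|\phi(t)|_{y_r}$ at both kinds of skeletal points. For type 3 points ($r\notin|k^\times|$), the coordinates $t$ and $u$ are themselves tame parameters: since $|k^\times|$ is $p$-divisible ($k$ being algebraically closed), neither $r$ nor $|\phi|_{y_r}$ lies in the $p$-th power of the value group of $\calH(y_r)$ or $\calH(f(y_r))$, so the defining formula $\delta_f=|du/dt|\cdot|t/u|$ gives the expression at once. For type 2 points ($r\in|k^\times|$), I renormalize by picking $c,c'\in k^\times$ with $|c|=r$ and $|c'|=|\phi|_{y_r}$, and setting $t'=t/c$, $u'=u/c'$. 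These have absolute value one and their reductions are transcendental generators of the residue fields $\wh{\calH}(y_r)$ and $\wh{\calH}(f(y_r))$, hence are not $p$-th powers. The type 2 definition gives $\delta_f(y_r)=|du'/dt'|_{y_r}=(|c|/|c'|)\,|\phi'(t)|_{y_r}=r\,|\phi'(t)|_{y_r}/|\phi(t)|_{y_r}$, matching the type 3 case.

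Since piecewise $|k^\times|$-monomial functions on an interval are closed under products and quotients, the uniform formula forces $\delta_f$ to be piecewise $|k^\times|$-monomial on each piece, and gluing the finitely many pieces concludes the proof. The main obstacle is the initial geometric reduction: arranging that the local annulus around a neighborhood of $y\in I$ maps into an annulus in $X$ requires care at type 2 branch points of $I$ and at endpoints, but it is standard bookkeeping on semistable models of $Y$ and $X$. Once this is achieved, the remaining Laurent-series calculation is essentially routine.
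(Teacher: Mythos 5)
Your argument is correct and follows essentially the same route as the source: the paper proves this lemma by citing \cite[Corollary~4.1.8]{CTT}, and the remark immediately following it describes exactly your strategy --- cover $I$ by intervals admitting global tame coordinates and reduce to piecewise $|k^\times|$-monomiality of $|du/dt|$. Your verification that the annulus coordinates are tame parameters at both type 2 and type 3 points, yielding the uniform formula $\delta_f(y_r)=r\,|\phi'|_{y_r}/|\phi|_{y_r}$, supplies the details the paper leaves to the reference.
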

\begin{proof}
This is a particular case of \cite[Corolary~4.1.8]{CTT}.
\end{proof}

\begin{rem}
The proof given in \cite{CTT} is rather straightforward: one covers $I$ by finitely many intervals that possess global tame coordinates. Once such coordinates are available, the claim reduces to piecewise monomiality of $|h|$, where $h=\frac{du}{dt}$. Moreover, \cite[Corolary~4.1.8]{CTT} also deals with the slightly more technical case of points of type 4.
\end{rem}

\subsubsection{Degree-$p$ coverings}
Now, we can extend Lemma~\ref{plem} to nice compact curves.

\begin{theor}\label{simplelem}
If $f\:Y\to X$ is a finite morphism of nice compact curves of degree $p$ then any skeleton $\Gamma=(\Gamma_Y,\Gamma_X)$ is radializing. In addition, for a point $y\in\Gamma^{(2)}$ the profile function $\varphi_y$ is as follows: if $f$ is radicial then $\varphi_y(r)=r^p$, and if $f$ is generically \'etale then $\varphi_y$ has degrees $p$ and $1$ with the breaking point at $r=\delta_f(y)^{1/(p-1)}$. In particular, $n_f(z)\in \{1,p\}$ for any $z\in Y\setminus\Gamma_Y$.
\end{theor}
\begin{proof}
If $f$ is radicial then all profiles are of the form $\varphi_y(r)=r^p$ since $n_f=p$ everywhere on $Y$. So, we can assume that $f$ is generically \'etale. Then the assertion follows from \cite[Theorem~7.1.4]{CTT}, but we prefer to give a self-contained argument for the sake of completeness.

Let $E$ be a connected component of $Y\setminus\Gamma_Y$. Then $E$ and $D=f(E)$ are open discs and $D$ is a connected component of $X\setminus\Gamma_X$. Let $y\in\Gamma_Y$ and $x=f(y)$ be the limit points of $E$ and $D$, respectively. By Lemma~\ref{plem}, the restriction $g\:E\to D$ of $f$ is radial and its profile $\varphi_g$ has degrees 1 and $p$. It remains to check that $\varphi_g$ is as asserted by the theorem. The map $g$ is given by a series $u=\phi(t)=\sum c_it^i$ and we saw in the proof of Lemma~\ref{plem} that $\frac{d\rmlog\varphi_g}{dr}(s)=n_g(s)=1$ if $s<|c_1|^{1/(p-1)}$ and $n_g(s)=p$ otherwise. It remains to observe that for any point $z\in l_O$, where $l_O$ is the upward interval in $E$ starting at the origin $O\in E$, $$\delta_f(z)|ut^{-1}|_z=\left|\frac{du}{dt}\right|_z=|c_1|.$$ So, by Lemma~\ref{diflem} this equality also holds at $y$, i.e. $\delta_f(y)=|c_1tu^{-1}|_y=|c_1|$.
\end{proof}

\subsubsection{Normal coverings}
Normal coverings of nice compact curves can be studied by splitting.

\begin{theor}\label{galoisth}
Assume that $f\:Y\to X$ is a normal covering of nice compact curves. Then any skeleton $\Gamma=(\Gamma_Y,\Gamma_X)$ is radializing.
\end{theor}
\begin{proof}
By Lemmas~\ref{composnicelem} and \ref{translem}, it suffices to prove this assertion in two cases: $f$ is radicial, and $f$ is a ramified Galois covering. The first case is obvious, since $n_f$ is a constant power of $p$. So, we can assume that $f$ is a ramified Galois covering. For any connected component $D$ of $Y\setminus\Gamma_Y$ the induced morphism $f_D\:D\to f(D)$ is a Galois covering whose Galois group $G_D$ can be identified with a subgroup of $G=\Gal(Y/X)$ called the decomposition group of $D$. In particular, Corollary~\ref{Gcor} implies that $f_D$ is radial and $|G_D|=p^m$.

To show that $\Gamma$ is radializing, it suffices to prove that if $D$ and $E$ are connected components of $Y\setminus\Gamma_Y$ with the same limit point in $\Gamma_Y$ then the radial morphisms $f_D$ and $f_E$ have the same profile function. Since $G_D$ is a $p$-group, it can be embedded into a $p$-Sylow subgroup $H$ of $G$. Factor $f$ as $Y\stackrel{a}\to Y/H\stackrel{b}\to X$ and note that $b$ induces an isomorphism $a(D)\toisom f(D)$ since $H$ contains the decomposition group of $D$. Thus, the radial morphisms $f_D$ and $a_D\:D\to a(D)$ are isomorphic, and hence have the same profile.

In the same way, there is a $p$-Sylow subgroup $H'$ containing $G_E$. Since $H$ and $H'$ are conjugate, there is a component $E'$ of $Y\setminus\Gamma_Y$ conjugated to $E$ and such that $H$ contains the decomposition group of $E'$. As above, $f_{E'}\:E'\to f(E')$ and $a_{E'}\:E'\to a(E')$ are isomorphic and hence have the same profile. But the morphism $a$ is radial by Theorem~\ref{simplelem} and the solvability of $H$. Therefore $a_{E'}$ and $a_D$ have the same profile and we obtain that $f_D$ and $f_{E'}$ have the same profile. It remains to note that $f_{E'}$ and $f_E$ are isomorphic via a conjugation, hence their profiles coincide too.
\end{proof}

\subsubsection{General finite coverings}
The above theorem allows to easily construct a radializing skeleton for any finite covering.

\begin{theor}\label{radialth}
Any finite morphism of nice compact curves $f\:Y\to X$ possesses a radializing skeleton. Moreover, if $g\:Z\to X$ is the normal closure of $f$ with the factorization morphism $h\:Z\to Y$ and $(\Gamma_Z,\Gamma_X)$ is any skeleton of $g$, then $(h(\Gamma_Z),\Gamma_X)$ is a radializing skeleton of $f$.
\end{theor}
\begin{proof}
Set $\Gamma_Y=h(\Gamma_Z)$, then $\Gamma=(\Gamma_Y,\Gamma_X)$ is a skeleton of $f$ and $(\Gamma_Z,\Gamma_Y)$ is a skeleton of $h$ by Lemma~\ref{translem}. Since $g$ and $h$ are normal coverings, they are radial with respect to the corresponding skeletons by Theorem~\ref{galoisth}. Therefore, $f$ is $\Gamma$-radial by Lemma~\ref{composnicelem}.
\end{proof}

\subsection{The profile function}\label{profsec}
So far, we only used profiles as a tool in proving radializaton results. Studying fine properties of profiles is the aim of this sections.

\subsubsection{Piecewise monomiality}
First, we prove that the profile function on the radializing skeleton is piecewise monomial. As in \cite[3.3.2]{CTT}, let $X^\hyp$ denote the set of points of $X$ not of type 1. For a skeleton $\Gamma_X$ of $X$ we set $\Gamma_X^\hyp=\Gamma_X\cap X^\hyp$. It is obtained from $\Gamma_X$ by removing all vertices of type 1.

\begin{theor}\label{profth}
If $f\:Y\to X$ is a finite morphism of nice compact curves and $\Gamma=(\Gamma_Y,\Gamma_X)$ is a radializing skeleton, then the profile function $\varphi^{(2)}_\Gamma\:\Gamma^{(2)}_Y\times[0,1]\to\Gamma^{(2)}_X\times[0,1]$ is piecewise $|k^\times|$-monomial, that is, it extends by continuity to a piecewise $|k^\times|$-monomial map $\varphi_\Gamma:\Gamma^{\hyp}_Y\times[0,1]\to\Gamma^{\hyp}_X\times[0,1]$.
\end{theor}
\begin{proof}
We start with two reductions that use the fact that profiles are compatible with compositions by Lemma~\ref{composnicelem}.

First, let $h\:Z\to X$ be the normal closure of $f$, let $\Gamma_Z=h^{-1}(\Gamma_X)$, and let $g\:Z\to Y$ be the morphism $h$ factors through. Then $\varphi^{(2)}_h=\varphi_f^{(2)}\circ \varphi^{(2)}_g$, where the profiles are taken for the corresponding skeleta. Therefore, if the assertion holds for $g$ and $h$, then it also holds for $f$ and its extended profile is determined by $\varphi_h=\varphi_f\circ \varphi_g$. Thus, we are reduced to the case when $f$ is a normal covering.

Second, if $f$ is radicial then the claim is trivial since each $\varphi_y(t)$ is of the form $t^{p^n}$ where $\deg(f)=p^n$. In general, a normal covering factors into the composition of a radicial morphism and a ramified Galois covering. Therefore, it suffices to consider the case when $f$ is a ramified Galois covering, and from now on we make this assumption. We should prove the following claim:

{\em Assume that $e=(u,v)$ is an (open) edge in $\Gamma_Y$ between vertices $u$ and $v$, and let $e'=f(e)$ be its image in $\Gamma_X$. Then the restriction $\varphi_e\:e^{(2)}\times[0,1]\to e'^{(2)}\times[0,1]$ of $\varphi^{(2)}_\Gamma$ is piecewise $|k^\times|$-monomial. In addition, if the endpoint $u$ is of type 2 then the same is true for the interval $[u,v)$.}

We start with two particular cases. Let $A$ be the connected component of $Y\setminus\{u,v\}$ with skeleton $e$. Then both $A$ and $A'=f(A)$ are either open annuli or punctured open discs, and it follows from \cite[Lemma~3.5.8(ii)]{CTT} that the multiplicity of $f$ along $e$ is constant and equals the degree of the finite morphism $f|_A\:A\to A'$. We denote this number $n_e$.

Case 1. {\em Assume that $(n_e,p)=1$.} In this case $\calH(y)/\calH(x)$ is tame for any $y\in e$ and $x=f(y)$. So the profile is trivial on $e\cap Y^{(2)}$ by Lemma~\ref{tamelem} and hence it extends to $e$ trivially. It remains to check that if $u$ is of type 2 then $\varphi_u$ is trivial too. We claim that this is indeed the case since $\calH(u)/\calH(f(u))$ is tame. It is easy to check the latter claim straightforwardly, but let us use a shortcut: since $\delta_f=1$ on $e$, we also have that $\delta_f(u)=1$ by Lemma~\ref{diflem}, and hence $\calH(u)/\calH(f(u))$ is tame by \cite[Lemma~4.2.2(ii)]{CTT}.

Case 2. {\em Assume that $\deg(f)=p$.} In this case, Theorem~\ref{simplelem} tells that each profile function $\varphi_y$ has degrees 1 and $p$ and the breaking point is equal to $\delta_f(y)^{1/(p-1)}$. So, the assertion follows from the fact that $\delta_f(y)$ is piecewise $|k^\times|$-monomial on $e$ by Lemma~\ref{diflem}.

Now, consider the general case. Let $G=\Gal(Y/X)$, let $H\subseteq G$ be the decomposition group of $e$, and let $S$ be a $p$-Sylow subgroup of $H$. Then $f$ splits as $Y=Y_0\to Y_1\to\dots\to Y_n=Y/S\to X$ with the first $n$ morphisms of degree $p$. The claim holds for the morphisms $Y_i\to Y_{i+1}$ by Case 2 and it holds for the morphism $Y/S\to X$ by Case 1. Since profile functions are compatible with compositions by Lemma~\ref{composnicelem}, the claim holds for $f$ as well.
\end{proof}

By continuity, one can also extend the profile function to the points of $\Gamma$ of type 1. We will not study this in details, but only outline the main idea.

\begin{rem}
In the situation of Theorem~\ref{profth} assume that $e=[y,v]$ is an edge of $\Gamma_Y$ and $y$ is of type 1, and let us describe the limit behaviour of $\varphi_f$ at $y$. First, if $f$ splits as $Y\stackrel{F^n}\to Y\to X$ then $\varphi_z(t)=\varphi_{F^n(z)}(t^{p^n})$ for any $z\in(y,v)$. Hence it suffices to study the case when $f$ is generically \'etale. Then the limit behaviour of the different at $y$ was described in \cite[Theorem~4.6.4]{CTT}, hence the same argument as in the proof of Theorem~\ref{profth} shows that:

(1) If $f$ is not wildly ramified at $y$ then $\varphi_z$ is constant for $z\in e$ close enough to $y$. If, moreover, $f$ is not residually wild at $y$ then $\varphi_z$ is trivial (in the sense that $\phi_z(t)=t$) for $z\in e$ close enough to $y$. 

(2) If $f$ is wildly ramified at $y$ then $\varphi_z$ tends to 0 at $y$ (as a function on $[0,1]$). In fact, a finer asymptotic behaviour can be expressed in terms of valued fields of height two, see Section~\ref{type1sec} below.
\end{rem}

\subsubsection{Extension to $X^\hyp$}
Next, we extend the profile function to the whole $X^\hyp$.

\begin{theor}\label{extth}
Assume that $f\:Y\to X$ is a finite morphism of nice compact curves. There exists a unique function $\varphi_f:Y^\hyp\times[0,1]\to X^\hyp\times[0,1]$ such that 

(1) $\varphi_f$ is compatible with profiles of skeletons: for any radializing skeleton $\Gamma=(\Gamma_Y,\Gamma_X)$ of $f$ one has that $\varphi_f|_{\Gamma_Y^\hyp\times[0,1]}=\varphi_{\Gamma}$.

(2) $\varphi_f$ is piecewise $|k^\times|$-monomial: for any interval $I\subset Y^\hyp$ the induced map $I\times[0,1]\to f(I)\times[0,1]$ is piecewise $|k^\times|$-monomial.
\end{theor}
\begin{proof}
Uniqueness is clear since any large enough skeletons is radializing. To prove existence choose any radializing skeleton $(\Gamma_Y,\Gamma_X)$ and consider the corresponding piecewise $|k^\times|$-monomial profile function $\varphi_{\Gamma_Y}$ constructed in Theorem~\ref{profth}. Extend it to a function $\varphi_f$ as follows: for any $y'\in Y$ with $y=q_{\Gamma_Y}(y')$, $r_Y=r_{\Gamma_Y}(y')$ and $r_X=r_{\Gamma_X}(f(y'))$, set $\varphi_f(y,t)=r_X^{-1}\varphi_{\Gamma_Y}(y,r_Yt)$. Clearly, this formula defines a function which is piecewise  $|k^\times|$-monomial on the complement of $\Gamma_Y$. Therefore, $\varphi_f$ satisfies (2). The fact that it also satisfies (1) follows from Lemma~\ref{enlargelem}, in which the same formula relates profiles of two skeletons.
\end{proof}

\subsubsection{A characterization of $\varphi_f$}
As was mentioned in \S\ref{globalprofilesec}, the global profile function $\varphi_f$ can be also characterized geometrically as follows.

\begin{theor}\label{interprof}
Let $f\:Y\to X$ be a finite morphism of nice compact curves. Then $\varphi_f$ is the only piecewise monomial function $\phi\:Y^\hyp\times[0,1]\to X^\hyp\times[0,1]$ such that for any point $y\in Y$ of type 2 the induced function $\phi_y\:[0,1]\to[0,1]$ can be described as follows: if $l=[z,y]$ is a path starting at a point of type 1 and approaching $y$ from a general direction (i.e. from any but finitely many directions) then $\varphi_y=f|_l$.
\end{theor}
\begin{proof}
Uniqueness is clear. Since $\varphi_f$ is piecewise monomial, we should only check its behaviour at a point $y$ of type 2. Choose large enough skeleton $\Gamma=(\Gamma_Y,\Gamma_X)$ of $f$ such that $\Gamma$ is radializing and $y\in\Gamma_Y$. Then $\varphi|_{\Gamma^{(2)}_Y}=\varphi^{(2)}_\Gamma$, and hence the restriction of $\varphi_f$ onto $y\times[0,1]$ is the profile function of $f$ restricted to any open disc of $Y\setminus\Gamma_Y$ attached to $y$. In particular, the asserted property holds for any interval $[z,y]$ such that $[z,y]\cap\Gamma_Y=\{y\}$.
\end{proof}

\subsubsection{Extension of the ground field}
The profile function is compatible with extensions of the ground field, but to formulate this we need first to introduce some notation. Let $K/k$ be an extension of complete algebraically closed real-valued fields. For any nice curve $X$ let $X_K=X\wtimes K$ denote the ground field extension. The fiber of the map $h\:X_K\to X$ over a point $x\in X$ possesses a unique maximal point that we denote $x_K$. In fact, this is true for a general $k$-analytic space, but here everything is very explicit: either $h^{-1}(x)=\{x_K\}$ or $h^{-1}(x)$ is a closed disc with maximal point $x_K$, and the second possibility can only happen (but does not have to) when $x$ is of type 4.

Now we are ready to prove compatibility. In fact, it holds for the section $Y\to Y_K$ that sends $y$ to $y_K$.

\begin{theor}\label{groundext}
Assume that $f\:Y\to X$ is a finite morphism of nice $k$-analytic curves and $g=f_K$ is obtained from $f$ by extending the ground field $k$ to a larger algebraically closed ground field $K$. Then $\varphi_{y}=\varphi_{y_K}$ for any point $y\in Y^\hyp$.
\end{theor}
\begin{proof}
For a point of type 2 this follows from the interpretation in Theorem~\ref{interprof}. If $y\in Y^\hyp$ is arbitrary then we consider a path $[z,y]$ with $z$ of type 2. The map $Y\to Y_K$ sends this path to the path $[z_K,y_K]$ and respects the profiles at points of type 2, which are dense in $[z,y]$. So, the claim follows by continuity.
\end{proof}

\subsubsection{Dependence on the completed residue fields}
Finally, we start a discussion on the dependence of $\varphi_y$ on the extension $\calH(y)/\calH(f(y))$.

\begin{theor}\label{invth}
Let $f\:Y\to X$ be a finite morphism between nice curves, let $y\in Y$ be a type 2 point, $L=\calH(y)$ and $K=\calH(f(y))$. The profile $\varphi_y$ is the invariant of the extension of valued fields $L/K$, say $\varphi_y=\varphi_{L/K}$, which is determined by the following conditions:

(a) $\varphi$ is transitive, i.e. $\varphi_{L/K}=\varphi_{F/K}\circ\varphi_{L/F}$ for any intermediate field $F$.

(b) $\varphi_{L/K}$ is trivial when $L/K$ is tame.

(c) $\varphi_{L/K}(r)=r^p$ if $L/K$ is inseparable of degree $p$.

(d) If $L/K$ is separable of degree $p$ and with different $\delta$ then $\varphi_{L/K}(r)$ has degrees 1 and $p$ and the breaking point is $r=\delta^{1/(p-1)}$.
\end{theor}
\begin{proof}
The profile functions satisfy conditions (b), (c) and (d) by Lemma~\ref{tamelem} and Theorem~\ref{simplelem}. Although the latter theorem addresses only points of type 2, this suffices by continuity since both the profile function and the different function are piecewise monomial, see Theorem~\ref{extth} and \cite[Corolary~4.1.8]{CTT}. It follows from \cite[Theorem~3.4.1]{berihes} that for any $K\subseteq F\subseteq L$ we can shrink $X$ and $Y$ so that $f$ factors through $g\:Y\to Z$ with $\calH(g(y))=F$. Hence condition (a) follows from Lemma~\ref{composnicelem}.

On the other hand, these conditions determine the invariant because any normal extension splits into composition of a tame extension and extensions of degree $p$. In particular, this implies that $\varphi_y$ is determined by $L/K$.
\end{proof}

\begin{rem}
As we saw, there is at most one invariant of extensions of valued fields that satisfies the four conditions from Theorem~\ref{invth}. Existence is not so obvious and requires some restrictions on the fields. In the classical case when the valuations are discrete and the residue fields are perfect, it is well known that the Herbrand function satisfies these conditions. We will show in Section~\ref{highsec} that the theory of higher ramification groups extends to the fields $\calH(y)$, where $y$ is a point on a $k$-analytic curve, and then it will automatically follow that $\varphi_y$ coincides with the Herbrand function of $\calH(y)/\calH(x)$.
\end{rem}

\section{Profile function and higher ramification}\label{highsec}
Unfortunately, many aspects of the theory of valued fields are not developed beyond the discrete case, and it seems that higher ramification is one of them. In this section we try to complete this gap to some extent. We will study the case of real-valued fields with non-discrete valuation. We ignore the discrete-valued case since it is known and requires distinguishing the usual and the logarithmic filtrations and Herbrand functions, see Remark~\ref{ramrem}.

\subsection{Higher ramification groups}

\subsubsection{Notation}
In the sequel, $K$ is a real-valued field, and we assume that the valuation is non-discrete and $p=\cha(\tilK)>0$. %We will use the ordered group $\Gamma=|K^\times|^\bfQ$ and the ordered monoid $\oGamma=|K|^\bfQ$ obtained from $\Gamma$ by adding 0.

\subsubsection{Henselian extensions}
We say that a finite extension of valued fields $L/K$ is {\em henselian} if there is a unique extension of the valuation of $K$ to $L$, or, what is equivalent, $\Lcirc/\Kcirc$ is integral. Note that, the valued field $K$ is henselian if and only if any finite extension is henselian. Only henselian extensions will be considered until Section~\ref{concectsec}, so sometimes we will not mention this assumption.

\subsubsection{The prefix ``almost"}
We will use the word ``almost" in the sense of almost mathematics of \cite{Gabber-Ramero}. An almost property $P$ for $K$ means that $P$ ``holds up to something killed" by any element of $\Kcirccirc$. For example, a homomorphism of $\Kcirc$-modules $f\:M\to N$ is an almost isomorphism if both $\Ker(f)$ and $\Coker(f)$ are annihilated by $\Kcirccirc$.

\subsubsection{Ramification groups}
Assume that $L/K$ is a finite henselian Galois extension of valued fields. Define the {\em inertia function} $i_{L/K}\:G\to[0,1]$ and the associated increasing filtration of $G=\Gal(L/K)$ as follows $$i_{L/K}(\sigma)=\sup_{c\in{\Lcirc}}|\sigma(c)-c|,\ \ G_r=\{\sigma\in G|\ i_{L/K}(\sigma)\le r\}.$$ The groups $G_r$ are called
(higher) {\em ramification groups}. Their formation is compatible with subgroups: if $F$ is an intermediate field with the Galois group $H\subseteq G$ then $H_r=G_r\cap H$.

\begin{rem}\label{ramrem}
(i) In the discrete-valued case, one shifts this filtration by $|\pi_K|$. In addition, one considers the logarithmic filtration $G_r^\rmlog$ given by the logarithmic inertia function $i^\rmlog_{L/K}(\sigma)=\sup_{c\in{L^\times}}|\frac{\sigma(c)}{c}-1|$. The two filtrations are pretty close; in fact, it is easy to see that $G_s\subseteq G_s^\rmlog\subseteq G_{s|\pi_K|^{-1}}$ for any $s$. Moreover, these filtrations coincide when $\tilL/\tilK$ is separable.

(ii) In the non-discrete case $i_{L/K}=i^\rmlog_{L/K}$, so there is no need to consider the logarithmic filtration separately.
\end{rem}

\subsubsection{Ramification jumps}
We say that $0\neq r\in|\Lcirc|$ is a {\em jump} of the ramification filtration if there exists $\sigma\in G$ such that $i_{L/K}(\sigma)=r$. This happens if and only if the group $G_{<r}=\cup_{s<r}G_s$ is strictly smaller than $G_r$.

\subsubsection{Herbrand function}
If $L/K$ is as above then the {\em Herbrand function} $\varphi_{L/K}$ is the bijective piecewise monomial function from $[0,1]$ to itself whose breaking points $r_0>r_1>\dots >r_n$ are the jumps of the ramification filtration and whose degrees are described as follows: set $r_{-1}=1$ (so $r_{-1}\ge r_0$) and $r_{n+1}=0$, then the degree on the interval $[r_i,r_{i-1}]$ with $0\le i\le n+1$ is equal to $g_i:=|G_{r_i}|$; for example, $g_0=|G|$ and $g_{n+1}=1$.

\begin{rem}
(i) Since we work with the multiplicative valuations, it is natural to represent Herbrand function as a piecewise monomial function. In the additive setting, the Herbrand function is a piecewise linear function $\varphi_{L/K}^\add$. The two functions are related as additive and multiplicative valuations: $\varphi_{L/K}^\add=-\log(\varphi_{L/K})$.

(ii) In the discrete-valued case, $\varphi_{L/K}$ maps $[0,|\pi_L|^{-1}]$ to $[0,|\pi_K|^{-1}]$ and the slope degree on the interval $[1,|\pi_L|^{-1}]$ equals $e_{L/K}$. Classically, one uses the additive language and normalizes both valuations so that the group of values is $\bfZ$ (e.g., in \cite[Ch. IV, \S3]{locfield}). In this case, one works with the function $\frac{1}{e_{L/K}}\varphi_{L/K}\:[-1,\infty)\to[-1,\infty)$ and the slopes are $g_i/e_{L/K}$.
\end{rem}

\subsubsection{The product formula}\label{prodsec}
It is easy to compute $\varphi_{L/K}$ directly: if $r\in[r_i,r_{i-1}]$ then
$$\varphi_{L/K}(r)=\left(\frac{r}{r_{i-1}}\right)^{g_i}\prod_{0\le j<i}\left(\frac{r_j}{r_{j-1}}\right)^{g_j}=r^{g_i}\prod_{0\le j<i}r_j^{g_j-g_{j+1}},$$
but the following formula will be more useful.

\begin{lem}\label{hlem}
Let $L/K$ be a finite henselian Galois extension of real-valued fields and $r\in[0,1]$. Then $$\varphi_{L/K}(r)=\prod_{\sigma\in G}\max\left(i_{L/K}(\sigma),r\right).$$
\end{lem}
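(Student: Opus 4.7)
The plan is to verify the identity by partitioning $G$ according to the values of $i_{L/K}$ and then comparing the resulting product with the explicit expression for $\varphi_{L/K}(r)$ recorded in \ref{prodsec}. Extend the list of breakpoints by the conventions $r_{-1}=1$, $r_{n+1}=0$ and $g_{n+1}=1$; the closed intervals $[r_i,r_{i-1}]$ for $0\le i\le n+1$ then cover $[0,1]$, and it suffices to check the formula on each of them separately, since both sides are continuous in $r$.

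Fix $r\in[r_i,r_{i-1}]$. By the definition of ramification groups, for each $j\in\{0,\dots,n\}$ the fiber $\{\sigma\in G: i_{L/K}(\sigma)=r_j\}$ has cardinality $|G_{r_j}|-|G_{<r_j}|=g_j-g_{j+1}$, while the identity is the unique element with $i_{L/K}(e)=0$ (which matches the choice $g_{n+1}=1$, $r_{n+1}=0$). Now I evaluate $\max(i_{L/K}(\sigma),r)$ one layer at a time: if $i_{L/K}(\sigma)=r_j$ with $j<i$ then $r_j\ge r_{i-1}\ge r$, so the maximum equals $r_j$; while if $j\ge i$ then $r_j\le r_i\le r$, so the maximum equals $r$, and the identity also contributes a factor of $r$. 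Multiplying over $\sigma\in G$, the exponent of $r$ telescopes to
\[
1+\sum_{j=i}^{n}(g_j-g_{j+1})=g_i,
\]
and the remaining contribution is $\prod_{0\le j<i}r_j^{g_j-g_{j+1}}$. This is exactly the second form of $\varphi_{L/K}(r)$ given in \ref{prodsec}, so the two sides of Lemma~\ref{hlem} agree on $[r_i,r_{i-1}]$.

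The argument is essentially bookkeeping, and I do not foresee a conceptual obstacle; the one point that does require care is the two extreme subintervals. For $i=0$ the first product is empty and every factor equals $r$, producing $r^{|G|}=r^{g_0}$; for $i=n+1$ every nontrivial $\sigma$ contributes $i_{L/K}(\sigma)$ and only the identity contributes $r$. Both extremes fit uniformly into the computation above thanks to the conventions on $r_{-1}$, $r_{n+1}$ and $g_{n+1}$, so no separate analysis is needed.
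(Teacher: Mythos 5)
Your proof is correct and is essentially the paper's argument: the paper checks that the right-hand side is piecewise monomial, equals $1$ at $r=1$, and has degree $g_i=|G_{r_i}|$ on $[r_i,r_{i-1}]$ (the degree count being exactly your count of the $\sigma$ with $i_{L/K}(\sigma)\le r$), while you carry out the same fiber-by-fiber bookkeeping explicitly and match the product formula of \ref{prodsec}. No gaps.
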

\begin{proof}
It suffices to observe that the righthand side is a piecewise monomial function such that $\varphi_{L/K}(1)=1$ and the degree on $[r_i,r_{i-1}]$ equals $g_i$.
\end{proof}

\subsubsection{The upper indexing}
Using the Herbrand function one introduces a shifted filtration via $G_r=G^s$, where $s=\varphi_{L/K}(r)$. Its jumps $s_i=\varphi_{L/K}(r_i)$ are often called the {\em upper jumps} as opposed to the lower jumps $r_i$.

\begin{rem}
(i) Although the Herbrand function and the upper indexing are defined for any Galois extension $L/K$, they are really meaningful only when some restrictions on $L/K$ are imposed, see Section~\ref{almsec} below. In this case, the Herbrand function is transitive in towers and the upper indexed filtration is compatible with passing to the quotients of $G$. The latter properties are (classically) the main motivation for introducing $\varphi_{L/K}$ and the shifted filtration.

(ii) As explained in \cite[Ch. IV, Remarks~3]{locfield}, the natural group where lower indexes live is $|L^\times|^\bfQ$ while the natural group where upper indexes live is $|K^\times|^\bfQ$, so $\varphi_{L/K}$ can be naturally viewed as a piecewise monomial map between ordered monoids $|\Lcirc|^\bfQ\to|\Kcirc|^\bfQ$. Moreover, this is the only definition making sense for general valued fields, especially of height larger than one. This interpretation also agrees with the fact that the lower indexing is compatible with passing to subgroups of $G$ while (in good cases) the upper indexing is compatible with quotients. In addition, it illustrates the similarity between $\varphi_{L/K}$ and the profile functions.
\end{rem}

\subsection{Almost monogeneous extensions}\label{almsec}
In this section, we introduce a class of extensions $L/K$ for which the ramification theory can be extended further.

\subsubsection{Monogeneous extensions}
A henselian extension of valued fields $L/K$ is called {\em monogeneous} if the extension of integers is so, i.e. $\Lcirc=\Kcirc[x]$. In the classical theory of ramification groups one often assumes that the residue fields are perfect, but one really needs the consequence that the extensions of rings of integers are monogeneous. This is based on the simple observation that for monogeneous extensions, the inertia function can be computed in terms of a generator $x$ as $i_{L/K}(\sigma)=|\sigma(x)-x|$.

\subsubsection{Almost monogeneous extensions}
In the non-discrete case we can extend the above class of extensions as follows: a henselian extension $L/K$ is {\em almost monogeneous} if for any $r<1$ there exists $x_r\in\Lcirc$ and $a_r\in\Kcirc$ such that $a_r\Lcirc\subseteq \Kcirc[x_r]$ and $r\le|a_r|$. An element $x_r$ will be called an {\em $r$-generator}. Note that automatically $K(x_r)=L$ and $L/K$ is finite.

\begin{rem}\label{logmonrem}
(i) We will see that higher ramification theory works fine for the class of almost monogeneous extensions. However, it is not clear if this class satisfies reasonable functoriality properties. For example, I do not know if it is closed under subextensions.

(ii) One may wonder what is the largest class of extensions to which the higher ramification theory extends. Perhaps, these are extensions $L/K$ such that the module $\Omega_{L/K}$ is almost cyclic. One can show that this class is closed under passing to subextensions and contains all almost monogeneous extensions and all separable extensions of degree $p$.
\end{rem}

\subsubsection{Almost monogeneous valued fields}
We say that a real-valued field $K$ is {\em almost monogeneous} (resp. {\em monogeneous}) if any finite henselian Galois extension $L/K$ is so. Some examples are listed below. The main conclusion is that if $y$ is a point on a $k$-analytic curve (and $k$ is algebraically closed) then $\calH(y)$ is almost monogeneous.

\begin{exam}\label{monoexam}
Recall that a henselian valued field $K$ is {\em stable} if $e_{L/K}f_{L/K}=[L:K]$ for any finite extension $L/K$. For a $k$-analytic curve with a point $x$ of type 2 or 3 the field $\calH(X)$ is stable (e.g., see \cite[Theorem~6.3.1(iii)]{temst}).

(i) If $f_{L/K}=[L:K]$ and $\tilL/\tilK$ is generated by a single element then $L/K$ is monogeneous and an element $x\in\Lcirc$ is a generator of $\Lcirc$
if and only if $\tilx$ generates $\tilL$ over $\tilK$. In particular, if $K$ is stable, $\tilK$ has $p$-rank 1 and $|K^\times|$ is divisible, then $K$ is monogeneous. This includes the fields $\calH(y)$, where $y$ is a type 2 point on a $k$-analytic curve.

(ii) If $e_{L/K}$ coincides with $n=[L:K]$ and $H=|L^\times|/|K^\times|$ is cyclic then $L/K$ is almost monogeneous and any element $x\in\Lcirc$, such that $|x|\ge r$ and $|x|$ generates $H$, is an $r^{n-1}$-generator. In particular, if $K$ is stable and not discrete-valued, $\tilK$ is algebraically closed and any finite subgroup of $|K^\times|^\bfQ/|K^\times|$ is cyclic, then $K$ is almost monogeneous. This includes the fields $\calH(y)$, where $y$ is a type 3 point on a $k$-analytic curve.

(iii) If $K=\calH(y)$ and $y$ is of type 4 then any finite extension $L/K$ is almost monogeneous. To prove this one should use that by stable reduction, $L=\calH(z)$ with $z$ a point of a disc. Hence $L=\wh{k(t)}$ and by a direct computation one can show that the elements $t_i=t-a_i$ with $a_i\in k$ and $|t-a_i|$ tending to $\inf_{c\in k}|t-c|$ provide a series of $r_i$-generators of $L/K$ with $r_i$ tending to 1. Perhaps the easiest way to do this is to extend the ground field from $k$ to $K$: since the completion of $\Frac(K\otimes_kK)$ is of type 2 or 3 over $K$, the claim reduces to one of the cases described in (i) and (ii).
\end{exam}

\subsubsection{Bounds on $i_{L/K}$}
Our motivation to introduce $r$-generators is that they provide the following control on the inertia function.

\begin{lem}\label{ilem}
If $L/K$ is a finite henselian Galois extension of real-valued fields with an $r$-generator $x$ and $\sigma\in\Gal(L/K)$ then $$|\sigma(x)-x|\le i_{L/K}(\sigma)\le r^{-1}|\sigma(x)-x|.$$
\end{lem}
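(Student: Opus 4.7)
The plan is to prove the two inequalities separately. The lower bound $|\sigma(x)-x|\le i_{L/K}(\sigma)$ is essentially definitional: since $x\in\Lcirc$, the quantity $|\sigma(x)-x|$ is one of the values over which the supremum defining $i_{L/K}(\sigma)$ is taken.

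For the upper bound, the guiding idea is that an $r$-generator controls all of $\Lcirc$ up to multiplication by $a$, and the estimate should come out of a single divided-difference computation. Fix an arbitrary $c\in\Lcirc$. By the almost monogeneity hypothesis $a\Lcirc\subseteq\Kcirc[x]$, so $ac=P(x)$ for some polynomial $P\in\Kcirc[T]$. Applying $\sigma$ gives $\sigma(a)\sigma(c)=P(\sigma x)$, and subtracting and rearranging produces the identity
$$a(\sigma c-c)=\bigl(P(\sigma x)-P(x)\bigr)+(a-\sigma a)\sigma(c).$$
The standard polynomial identity $P(T)-P(S)=(T-S)Q(T,S)$ in $\Kcirc[T,S]$, together with $|x|,|\sigma x|\le 1$, yields $|P(\sigma x)-P(x)|\le|\sigma x-x|$. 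The same trick applied to $a$ itself -- which lies in $\Kcirc[x]$ because $a=a\cdot 1\in a\Lcirc\subseteq\Kcirc[x]$ -- gives $|\sigma a-a|\le|\sigma x-x|$. Since $|\sigma c|=|c|\le 1$, the ultrametric inequality then forces $|a|\cdot|\sigma c-c|\le|\sigma x-x|$, so
$$|\sigma c-c|\le|a|^{-1}|\sigma x-x|<r^{-1}|\sigma x-x|,$$
the strict inequality following from $|a|>r$. Taking the supremum over $c\in\Lcirc$ completes the proof.

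The only mild subtlety is that the definition allows $a\in\Lcirc$ rather than just $a\in\Kcirc$, which is why the term $(a-\sigma a)\sigma(c)$ appears and must be controlled separately rather than vanishing under $\sigma$. Applying the divided-difference bound to $a$ itself disposes of this cleanly, without needing to replace $a$ by, say, its norm (which would only yield the much weaker bound $r^{-n}|\sigma x-x|$).
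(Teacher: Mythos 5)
Your proof is correct and follows essentially the same route as the paper's: the lower bound is definitional, and the upper bound comes from the divided-difference estimate $|\sigma(y)-y|\le|\sigma(x)-x|$ for $y\in\Kcirc[x]$ applied to $ac$, combined with $|a|>r$. Your explicit treatment of the term $(a-\sigma a)\sigma(c)$ (needed because the definition only requires $a\in\Lcirc$, not $a\in\Kcirc$) fills in a step the paper's one-line "hence" leaves implicit, and it is handled correctly.
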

\begin{proof}
Only the right inequality needs a proof. Since $|x|\le 1$, we have that $$|x^i\sigma(x^{n-i})-x^{i-1}\sigma(x^{n-i+1})|\le|\sigma(x)-x|$$ for any $n\ge 1$ and $1\le i\le n$. It follows that $|\sigma(x^n)-x^n|\le|\sigma(x)-x|$, and hence any $y\in\Kcirc[x]$ satisfies $|\sigma(y)-y|\le|\sigma(x)-x|$. Since $a_r\Lcirc\subseteq\Kcirc[x]$ for some $a_r\in\Kcirc$ with $r\le|a_r|$ we obtain that $$r|\sigma(z)-z|\le|a_r|\cdot|\sigma(z)-z|=|\sigma(a_rz)-a_rz|\le|\sigma(x)-x|$$ for any $z\in\Lcirc$. Thus, $ri_{L/K}(\sigma)\le|\sigma(x)-x|$, as required.
\end{proof}

\subsubsection{The key lemma}
Now, we are going to establish a key result that relates $i_{L/K}$ to $i_{F/K}$ for $L/F/K$. In fact, this is the only computation where we directly (via Lemma~\ref{ilem}) use that $L/K$ is almost monogeneous; all other results will use this assumption via the key lemma. The standard proof in the classical case is due to J. Tate; it is short but rather tricky, see \cite[Ch. IV, Prop. 3]{locfield}. Tate's proof extends to almost monogeneous extensions straightforwardly.

\begin{lem}\label{keylem}
Assume that $L/K$ is an almost monogeneous Galois extension of real-valued fields and $F$ is the invariant field of a normal subgroup $H\subseteq G=\Gal(L/K)$. Then $i_{F/K}(\sigma)=\prod_{\tau\mapsto\sigma}i_{L/K}(\tau)$ for any $\sigma\in G/H$.
\end{lem}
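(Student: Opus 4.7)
The strategy is to adapt J.~Tate's classical proof (see \cite[Ch.~IV, Prop.~3]{locfield}) by using an $r$-generator $x=x_r$ of $\Lcirc/\Kcirc$ in place of a genuine generator and letting $r\to 1^-$ through $|\Lcirccirc|$; the latter is dense in $(0,1)$ because the valuation is non-discrete. Tate's two sub-arguments each yield the required bound up to a bounded error factor of the form $r^{-m}$, and the limit absorbs this error.

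The case $\sigma=1$ is immediate because then the right-hand product contains the zero factor $i_{L/K}(1)=0$, matching $i_{F/K}(1)=0$. So assume $\sigma\ne 1$. Fix $r\in|\Lcirccirc|$ close to $1$ together with an $r$-generator $x\in\Lcirc$ and some $a\in\Lcirc$ with $|a|>r$ and $a\Lcirc\subseteq\Kcirc[x]$. Replacing $a$ by its norm $\mathrm{N}_{L/K}(a)$ (all $G$-conjugates have equal absolute values since $L/K$ is henselian) we may assume $a\in\Kcirc$, at the cost of weakening $|a|>r$ to $|a|>r^{|G|}$ — harmless after reparametrizing $r$. Fix a lift $\tilde\sigma\in G$ of $\sigma$. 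Note that $a\Lcirc\subseteq\Kcirc[x]$ forces $L=K(x)$, so $G$ acts faithfully on $x$; consequently the $|H|$ elements $\{\tau(x):\tau\in\tilde\sigma H\}$ are pairwise distinct (and distinct from $x$, because $\sigma\ne 1$ forces every $\tau\in\tilde\sigma H$ to differ from $1$). Set $b:=\prod_{\tau\in\tilde\sigma H}(\tau(x)-x)$.

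\emph{First inequality.} The polynomial $P(T)=\prod_{\tau'\in H}(T-\tau'(x))$ has coefficients $c_i$ that are $H$-invariant and integral over $\Kcirc$, hence lie in $F^\circ$. Since $P(x)=0$ while $\tilde\sigma(P)(x)=\pm b$, expanding $\tilde\sigma(P)-P=\sum_i(\tilde\sigma(c_i)-c_i)T^i$ and using $|x|\le 1$ gives $|b|\le\max_i|\tilde\sigma(c_i)-c_i|\le i_{F/K}(\sigma)$. Combined with Lemma~\ref{ilem} this yields
\[
\prod_{\tau\to\sigma}i_{L/K}(\tau)\ \le\ r^{-|H|}\prod_{\tau\in\tilde\sigma H}|\tau(x)-x|\ =\ r^{-|H|}|b|\ \le\ r^{-|H|}\,i_{F/K}(\sigma).
\]
\emph{Second inequality.} For $y\in F^\circ$ write $ay=g(x)$ with $g\in\Kcirc[T]$. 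Every $\tau=\tilde\sigma\tau'\in\tilde\sigma H$ satisfies $\tau(y)=\tilde\sigma(\tau'(y))=\tilde\sigma(y)$ because $\tau'\in H$ fixes $F\ni y$, so $g(\tau(x))-g(x)=a(\tilde\sigma(y)-y)$ uniformly over $\tau\in\tilde\sigma H$. Each of the $|H|$ distinct elements $\tau(x)$ is therefore a root of the polynomial $g(T)-g(x)-a(\tilde\sigma(y)-y)\in\Lcirc[T]$, so the monic polynomial $\prod_{\tau\in\tilde\sigma H}(T-\tau(x))$ divides it in $\Lcirc[T]$. Evaluating the quotient at $T=x$ gives $-a(\tilde\sigma(y)-y)=\pm b\cdot Q(x)$ with $Q\in\Lcirc[T]$; hence $|a|\cdot|\tilde\sigma(y)-y|\le|b|\le\prod_{\tau\to\sigma}i_{L/K}(\tau)$ (the last step by Lemma~\ref{ilem}). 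Taking the supremum over $y\in F^\circ$ yields $i_{F/K}(\sigma)\le r^{-1}\prod_{\tau\to\sigma}i_{L/K}(\tau)$.

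Letting $r\to 1^-$ in both estimates produces the equality $i_{F/K}(\sigma)=\prod_{\tau\to\sigma}i_{L/K}(\tau)$. The most delicate point is the uniform identity $g(\tau(x))-g(x)=a(\tilde\sigma(y)-y)$ in the second inequality: it relies on $a$ being in $\Kcirc$ (so that $\tilde\sigma$ and the elements of $H$ act trivially on it) and on the ability to approximate an arbitrary $y\in F^\circ$ by an integral polynomial in $x$ up to the denominator $a$. The norm trick and the almost monogeneous hypothesis together supply exactly this, which is how Tate's original identity survives the relaxation from monogeneous to almost monogeneous.
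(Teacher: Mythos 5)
Your proof is correct and follows essentially the same route as the paper: both are Tate's classical argument run with the polynomial $\prod_{\tau\in H}(T-\tau(x))$ and its $\sigma$-translate, with the error factor $r^{-m}$ coming from Lemma~\ref{ilem} absorbed by letting $r\to 1$. The only cosmetic difference is in the second inequality, where you keep the denominator $a$ explicit (normalizing it into $\Kcirc$ via the norm) while the paper instead enlarges $r$ so that the element $v\in F^{\circ\circ}$ itself lies in $\Kcirc[x_r]$.
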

\begin{proof}
Let $r<1$ and fix an $r$-generator $u\in\Lcirc$ with minimal polynomial over $F$ $$f(t)=\sum_{j=0}^da_jt^j=\prod_{\tau\in H}(t-\tau(u)).$$ Consider the $\sigma$-translate $$f^\sigma(t)=\sum_{j=0}^d\sigma(a_j)t^j=\prod_{\tau\mapsto\sigma}(t-\tau(u))$$ of $f(t)$. Substituting $t=u$ and using Lemma~\ref{ilem} we obtain that
\begin{equation}\label{eq1}
|f^\sigma(u)|\le\prod_{\tau\mapsto\sigma}i_{L/K}(\tau)\le r^{-d}|f^\sigma(u)|.
\end{equation}
On the other hand, since $f(t)\in\Fcirc[t]$ and $u\in\Lcirc$ we obtain that
$$
|f^\sigma(u)|=|f^\sigma(u)-f(u)|\le \max_j |\sigma(a_j)-a_j|\cdot|u|^j\le i_{F/K}(\sigma).
$$
Combining this with the right side of (\ref{eq1}) and using that $r$ can be arbitrarily close to 1, we obtain that $i_{F/K}(\sigma)\ge\prod_{\tau\mapsto\sigma}i_{L/K}(\tau)$.

Let us prove the opposite inequality. We should check that $$|v-\sigma(v)|\le\prod_{\tau\mapsto\sigma}i_{L/K}(\tau)$$ for any $v\in\Fcirc$. If the inequality fails for some $v$ then using that $K$ is not discrete-valued, we can find $a\in\Kcirccirc$ such that the inequality fails also for the element $av\in\Fcirccirc$. So, it suffices to prove the inequality only for $v\in\Fcirccirc$. For such $v$ we can enlarge $r$ and adjust $u$ so that $v\in\Kcirc[u]$, say $v=h(u)$ for $h(t)\in\Kcirc[t]$. Then $u$ is a root of $h(t)-v\in\Fcirc[t]$ and therefore $h(t)-v=f(t)g(t)$ in $\Fcirc[t]$. Since $h^\sigma=h$, we have that $h(t)-\sigma(v)=f^\sigma(t)g^\sigma(t)$ and substituting $t=u$ gives $v-\sigma(v)=f^\sigma(u)g^\sigma(u)$. Therefore, $|v-\sigma(v)|\le|f^\sigma(u)|$, and using the left side of (\ref{eq1}) we obtain that $|v-\sigma(v)|\le\prod_{\tau\mapsto\sigma}i_{L/K}(\tau)$, as required.
\end{proof}

\subsection{Herbrand's theorem}

\subsubsection{Index transition}
To compare $i_{L/K}$ and $i_{F/K}$, for any $\sigma\in G/H$ set $$j_{L/F/K}(\sigma)=\min_{\tau\mapsto\sigma}(i_{L/K}(\tau)).$$

\begin{lem}\label{jlem}
Let $L/K$ be as in Lemma~\ref{keylem} and let $F$ be the invariant field of a normal subgroup $H\subseteq G$. Then $i_{F/K}(\sigma)=\varphi_{L/F}(j_{L/F/K}(\sigma))$.
\end{lem}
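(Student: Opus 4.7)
The plan is to combine the product formula from Lemma~\ref{keylem} with the product-formula expression for $\varphi_{L/F}$ from Lemma~\ref{hlem}. Concretely, pick a preimage $\tau_0 \in G$ of $\sigma$ that realizes the minimum, i.e.\ $i_{L/K}(\tau_0) = j_{L/F/K}(\sigma) =: j$. Every other preimage of $\sigma$ then has the form $\tau_0 h$ for a unique $h \in H = \Gal(L/F)$, and the key claim I would prove is
\[
i_{L/K}(\tau_0 h) \;=\; \max\bigl(j,\, i_{L/F}(h)\bigr) \qquad\text{for every } h \in H.
\]

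The upper bound comes from a direct ultrametric estimate: for any $c \in L^\circ$,
\[
|\tau_0 h(c) - c| \;\le\; \max\bigl(|\tau_0(h(c) - c)|,\, |\tau_0(c) - c|\bigr) \;\le\; \max\bigl(i_{L/F}(h),\, i_{L/K}(\tau_0)\bigr),
\]
using that $\tau_0$ is an isometry. For the matching lower bound I would use two symmetric applications of the ultrametric inequality, noting that as elements of $G$ the inertia $i_{L/K}$ of $h$ and $i_{L/F}(h)$ coincide (both are the same supremum over $L$). From $\tau_0 = (\tau_0 h) h^{-1}$ one gets $j \le \max(i_{L/K}(\tau_0 h), i_{L/F}(h))$, and from $h = \tau_0^{-1}(\tau_0 h)$ one gets $i_{L/F}(h) \le \max(j, i_{L/K}(\tau_0 h))$ (using $i_{L/K}(\tau_0^{-1}) = i_{L/K}(\tau_0) = j$). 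Together with the minimality $i_{L/K}(\tau_0 h) \ge j$ guaranteed by the definition of $j$, these two inequalities force $i_{L/K}(\tau_0 h) \ge \max(j, i_{L/F}(h))$, completing the claim.

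Once this pointwise identity is established, the rest is a one-line computation. By Lemma~\ref{keylem},
\[
i_{F/K}(\sigma) \;=\; \prod_{\tau \to \sigma} i_{L/K}(\tau) \;=\; \prod_{h \in H} i_{L/K}(\tau_0 h) \;=\; \prod_{h \in H} \max\bigl(i_{L/F}(h),\, j\bigr),
\]
and by Lemma~\ref{hlem} applied to the extension $L/F$ with parameter $r = j$,
\[
\varphi_{L/F}(j) \;=\; \prod_{h \in H} \max\bigl(i_{L/F}(h),\, j\bigr).
\]
Comparing the two expressions yields $i_{F/K}(\sigma) = \varphi_{L/F}(j_{L/F/K}(\sigma))$, as desired.

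The main obstacle is the equality $i_{L/K}(\tau_0 h) = \max(j, i_{L/F}(h))$; the upper bound is automatic from the non-archimedean estimate, but the lower bound is the subtle point where one must genuinely use that $\tau_0$ was chosen to minimize inertia over the coset $\sigma H$. Everything afterwards is a formal manipulation of the two product formulas already proved.
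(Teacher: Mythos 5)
Your proof is correct and follows essentially the same route as the paper: choose a preimage $\tau_0$ realizing the minimum, establish $i_{L/K}(\tau_0 h)=\max\bigl(j,\,i_{L/F}(h)\bigr)$, and then combine Lemma~\ref{keylem} with the product formula of Lemma~\ref{hlem}. The only difference is that the paper asserts the pointwise max identity without justification, whereas you supply the ultrametric argument for both bounds — a worthwhile elaboration, and your use of the minimality of $j$ for the lower bound is exactly the point that needs it.
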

\begin{proof}
Choose $\tau$ above $\sigma$ such that $j_{L/F/K}(\sigma)=i_{L/K}(\tau)$. We claim that 
$$i_{L/K}(\tau\lam)=\max\left(i_{L/K}(\lam),j_{L/F/K}(\sigma)\right)$$ for any $\lam\in H$. 
Since $i_{L/K}(\tau\lam)\ge j_{L/F/K}(\sigma)=i_{L/K}(\tau)$, we should show that if $$i_{L/K}(\tau\lam)>i_{L/K}(\tau)\ \ \  {\rm or}\ \ \  i_{L/K}(\lam)>i_{L/K}(\tau)$$ then $i_{L/K}(\tau\lam)=i_{L/K}(\lam)$. To prove this observe that if  $$|\tau\lambda(c)-c|>|\tau\lambda(c)-\lambda(c)|\ \ \ {\rm or}\ \ \  |\lambda(c)-c|>|\tau\lambda(c)-\lambda(c)|$$ then $|\tau\lambda(c)-c|=|\lambda(c)-c|$ and pass to the supremum over $c\in\Lcirc$.

%Indeed, for any $c\in\Lcirc$ we have that
%\begin{equation}\label{3}
%|\tau\lambda c-c|\le\max(|\lambda c-c|,|\tau\lambda c-\lambda c|),
%\end{equation}
%and passing to we obtain the inequality $\le$ in (\ref{2}). Furthermore, if the equality in (2) fails then 

Now, the key lemma \ref{keylem} yields that $$i_{F/K}(\sigma)=\prod_{\tau\mapsto\sigma}i_{L/K}(\tau)=\prod_{\lam\in H}\max(i_{L/K}(\lam),j_{L/F/K}(\sigma)),$$
and by Lemma~\ref{hlem} the righthand side equals $\varphi_{L/F}(j_{L/F/K}(\sigma))$.
\end{proof}

\begin{cor}\label{jcor}
Let $L/F/K$ be as above. If $s=\varphi_{L/F}(r)$ then $G_r H/H=(G/H)_s$.
\end{cor}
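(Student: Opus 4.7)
The plan is to deduce the corollary directly from Lemma~\ref{jlem} by unwinding the definitions, using that the Herbrand function $\varphi_{L/F}$ is a monotone bijection of $[0,1]$.

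First, I would rewrite the membership condition for $(G/H)_s$. By definition, $\sigma\in (G/H)_s$ means $i_{F/K}(\sigma)\le s$. Applying Lemma~\ref{jlem}, this is equivalent to $\varphi_{L/F}(j_{L/F/K}(\sigma))\le s=\varphi_{L/F}(r)$. Since Herbrand's function, as constructed from the jumps and orders of the ramification groups, is a piecewise monomial bijection of $[0,1]$ onto itself with positive slopes, it is strictly monotonic. Hence the last inequality is equivalent to $j_{L/F/K}(\sigma)\le r$.

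Next, I would translate the condition $j_{L/F/K}(\sigma)\le r$ into the membership $\sigma\in G_rH/H$. By definition of $j$ as a minimum over the fiber above $\sigma$, the inequality $\min_{\tau\to\sigma}i_{L/K}(\tau)\le r$ holds if and only if there exists at least one $\tau\in G$ lifting $\sigma$ with $i_{L/K}(\tau)\le r$, i.e.\ with $\tau\in G_r$. This precisely says that $\sigma$ lies in the image of $G_r$ under the quotient map $G\to G/H$, which is $G_rH/H$.

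Combining the two equivalences yields $(G/H)_s=G_rH/H$, as desired. There is no real obstacle here; the only small point to verify carefully is that the supremum/infimum in the definitions of $j_{L/F/K}$ and $i_{L/K}$ interact correctly with the order-preserving bijection $\varphi_{L/F}$, and that $j$ is a minimum (hence achieved), so that the existence of a lift in $G_r$ is equivalent to $j(\sigma)\le r$ with no strict/non-strict issue. Since each $i_{L/K}(\tau)$ lies in a finite set as $\tau$ ranges over the finite fiber, the minimum is attained and the argument is clean.
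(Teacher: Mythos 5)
Your proposal is correct and follows essentially the same route as the paper: the paper's proof likewise observes that $\sigma\in G_rH/H$ iff $j_{L/F/K}(\sigma)\le r$, applies Lemma~\ref{jlem} together with the (strict) monotonicity of $\varphi_{L/F}$, and concludes that this is equivalent to $i_{F/K}(\sigma)\le s$, i.e.\ to $\sigma\in(G/H)_s$. Your extra remarks about the minimum being attained over the finite fiber and the bijectivity of $\varphi_{L/F}$ just make explicit what the paper leaves implicit.
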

\begin{proof}
Note that $\sigma\in G_r H/H$ if and only if $j_{L/F/K}(\sigma)\le r$. By Lemma~\ref{jlem}, the latter happens if and only if $i_{F/K}(\sigma)\le s$, and this happens if and only if $\sigma\in(G/H)_s$.
\end{proof}

\subsubsection{Herbrand's theorem}
Now, we can prove our main result about the Herbrand function and the upper indexed filtration.

\begin{theor}\label{Herth}
Assume that $L/K$ is an almost monogeneous Galois extension of real-valued fields and $F$ is the invariant field of a normal subgroup $H\subseteq G=\Gal(L/K)$. Then,

(i) $\varphi_{L/K}=\varphi_{F/K}\circ\varphi_{L/F}$,

(ii) $(G/H)^s=G^sH/H$ for any $s\in[0,1]$.
\end{theor}
\begin{proof}
(i) Let $r\in[0,1]$ and $s=\varphi_{L/F}(r)$. Both $\varphi_{L/K}$ and $\varphi_{F/K}\circ\varphi_{L/F}$ are piecewise monomial functions which are equal to 1 at 1, hence it suffices to check that their degree at $r\in[0,1]$ coincide (at the break points we take the degree from the left). The degree of the composite at $r$ is equal to $$\deg\varphi_{F/K}(s)\deg\varphi_{L/F}(r)=|(G/H)_s|\cdot |H_r|.$$ By Corollary~\ref{jcor}, the latter is equal to $|G_r|$, which is the degree of $\varphi_{L/K}$ at $r$.

(ii) Choose $r$ with $s=\varphi_{F/K}(r)$. Then $(G/H)^s=(G/H)_r=G_tH/H$ by Corollary~\ref{jcor}, where $r=\varphi_{L/F}(t)$. It remains to note that $\varphi_{L/K}(t)=s$ by part (i) and hence $G_t=G^s$.
\end{proof}

\begin{rem}
In the classical situation, the second part of Theorem~\ref{Herth} is called Herbrand's theorem. However, the first part is, perhaps, even more important. In a sense, it shows that the Herbrand function is a reasonable invariant of an almost monogeneous extension $L/K$.
\end{rem}

\subsubsection{Herbrand function for non-normal extensions}
Part (i) of Herbrand's theorem allows us to extend the definition of the Herbrand function to non-normal separable extensions $F/K$ such that the Galois closure $L/K$ of $F/K$ is almost monogeneous. Indeed, in this case the Galois extension $L/F$ is also almost monogeneous, and we define $\varphi_{F/K}$ to be the piecewise monomial function that satisfies $\varphi_{L/K}=\varphi_{F/K}\circ\varphi_{L/F}$. In particular, in view of Example~\ref{monoexam}, the Herbrand function is defined for any finite separable extension $L/\calH(x)$, where $x$ is a point on a $k$-analytic curve. It is easy to see that in this setting Theorem~\ref{Herth}(i) extends to non-normal extensions.

\subsection{Other properties of $\varphi_{L/K}$}

\subsubsection{Tame extensions}
Since the valuation of $K$ is not discrete, if $L/K$ is tame then any $\sigma\in G$ satisfies $i_{L/K}(\sigma)=1$. Indeed, it follows from the standard theory of tame extensions that either $\sigma$ acts non-trivially on $\tilL/\tilK$ or it acts on an element $x\in\Lcirc$ via $\mu_n$ with $n$ invertible in $\tilK$. In the latter case we can multiply $x$ by an element of $K$ making $|x|$ arbitrarily close to 1. It follows that $i_{L/K}(\sigma)=1$ in either case, and we obtain the following result.

\begin{lem}\label{tameherlem}
If $L/K$ is a finite tame henselian Galois extension then $G_s=1$ for any $s<1$. So, the ramification filtration is trivial, 1 is the only jump point, and the Herbrand function is the identity.
\end{lem}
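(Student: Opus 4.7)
The plan is to reduce the entire statement to showing that $i_{L/K}(\sigma)=1$ for every non-identity $\sigma\in G=\Gal(L/K)$. Granted this, the definition $G_s=\{\sigma\in G\mid i_{L/K}(\sigma)\le s\}$ immediately yields $G_s=\{1\}$ for $s<1$ and $G_1=G$, so the filtration is trivial and $r_0=1$ is the unique jump. Then in the definition of Herbrand's function we have $r_{-1}=1$ and $r_1=0$, the only interval is $[0,1]$, its slope equals $g_1=|G_0|=1$, and so $\varphi_{L/K}(t)=t$, as asserted.

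To produce, for each non-identity $\sigma$, elements $c\in L$ with $|\sigma c-c|$ arbitrarily close to $1$, I would invoke the standard structure theorem for tame henselian Galois extensions: $L/K$ factors as an unramified Galois extension $K\subseteq K^{\rm ur}$ followed by a totally tamely ramified extension generated by Kummer-type radicals $y^n=a$ with $(n,p)=1$. For a given non-identity $\sigma$, one of two cases arises. Either $\sigma$ acts nontrivially on $\tilL$, and then any $c\in\Lcirc$ whose reduction is moved by $\sigma$ satisfies $\sigma(c)-c\in\Lcirc$ with nonzero reduction, so $|\sigma c-c|=1$ outright. Or $\sigma$ fixes $\tilL$ pointwise but acts nontrivially on some tame Kummer generator $x\in\Lcirc$, via $\sigma(x)=\zeta x$ with $\zeta$ a primitive $n$th root of unity and $(n,p)=1$.

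In the second case, the key computation is $|\sigma(x)-x|=|\zeta-1|\cdot|x|$. The cyclotomic identity $n=\prod_{k=1}^{n-1}(1-\zeta^k)$, combined with $|n|=1$ (since $n$ is a unit in $\Kcirc$) and the obvious bound $|1-\zeta^k|\le 1$, forces each factor to have absolute value $1$, and in particular $|\zeta-1|=1$. Thus $|\sigma(x)-x|=|x|$. Since the valuation of $K$ is assumed non-discrete, $|K^\times|$ is dense in $\bfR_{>0}$, so scaling $x$ by suitable elements of $K^\times$ lets $|x|$ approach $1$ arbitrarily closely, and taking the supremum yields $i_{L/K}(\sigma)=1$.

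The only real subtlety is the appeal to the structure theorem for tame henselian Galois extensions in the non-discrete setting, but this is a routine consequence of Hensel's lemma (used both to lift $n$th roots of unity into $K$ for each $n$ prime to $p$ present in the ramification index, and to split the unramified part). Everything else in the argument is a direct, elementary calculation; I do not anticipate a serious obstacle.
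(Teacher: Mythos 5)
Your proposal is correct and follows essentially the same route as the paper: the paper's argument also splits a nontrivial $\sigma$ into the case where it moves $\tilL$ (giving $|\sigma c-c|=1$ outright) and the case where it multiplies some $x\in\Lcirc$ by an $n$th root of unity with $n$ invertible in $\tilK$, then uses $|\zeta-1|=1$ and non-discreteness of $|K^\times|$ to push $|x|$ arbitrarily close to $1$. Your explicit verification of $|\zeta-1|=1$ via the cyclotomic product is a fine way to fill in the detail the paper leaves implicit.
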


Since tameness of an extension is preserved by passing to the Galois closure and the Herbrand function is multiplicative in towers by Theorem~\ref{Herth}(i), we also obtain:

\begin{cor}\label{tamehercor}
If $L/K$ is a finite tame henselian extension, then $\phi_{L/K}(t)=t$.
\end{cor}

\subsubsection{The different}
In general, the different $\delta_{L/K}$ of a finite separable extension of real-valued fields $L/K$ can be defined as the zeroth Fitting ideal of the module of differentials $\Omega_{\Lcirc/\Kcirc}$. This requires some care since $\Omega_{\Lcirc/\Kcirc}$ is only almost finitely generated, see \cite[Section~VI.6.3]{Gabber-Ramero}. However, in the differential rank one case this simplifies and one can use the usual definition, namely $$\delta_{L/K}=|\Ann(\Omega_{\Lcirc/\Kcirc})|$$ whenever $\Omega=\Omega_{\Lcirc/\Kcirc}$ is {\em almost cyclic}, i.e. for any $r\in|\Lcirccirc|$ there exists $a_r\in\Omega$ such that $\Omega/a_r\Omega$ is killed by any $\pi\in L$ with $|\pi|\le r$. Note that in this case $\delta_{L/K}$ is the limit of the annihilators of elements $a_r$.

\begin{lem}\label{difalmlem}
Assume that $L/K$ is a finite separable extension of real-valued fields such that $\Lcirc$ is the filtered union of monogeneous subrings $A_i=\Kcirc[x_i]$. Then $\delta_{L/K}=\lim_i |f'_i(x_i)|$, where $f_i$ is the minimal polynomial of $x_i$ over $K$.
\end{lem}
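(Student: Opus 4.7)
The plan is to apply the first fundamental exact sequence of K\"ahler differentials to the tower $\Kcirc\to A_i\to\Lcirc$, exploit the resulting almost-cyclic structure of $\Omega_{\Lcirc/\Kcirc}$ to obtain the upper bound $|f'_i(x_i)|\cdot|a_i|\le\delta_{L/K}$, and use trace duality for the matching lower bound; concluding by letting $|a_i|\to 1$ along the filtered index.

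For $i$ sufficiently far along the filtered index, $K(x_i)=L$ and, since $L/K$ is henselian, $x_i$ is integral over $\Kcirc$, so $A_i\cong\Kcirc[t]/(f_i(t))$ is a free $\Kcirc$-module of rank $n=[L:K]$. Hence $\Omega_{A_i/\Kcirc}$ is cyclic on $dx_i$ with annihilator $(f'_i(x_i))$, and tensoring with $\Lcirc$ yields $\Lcirc\otimes_{A_i}\Omega_{A_i/\Kcirc}\cong\Lcirc/(f'_i(x_i))$. The first exact sequence then reads
\[
\Lcirc/(f'_i(x_i))\xrightarrow{\varphi_i}\Omega_{\Lcirc/\Kcirc}\to\Omega_{\Lcirc/A_i}\to 0.
\]
The filtered-union hypothesis supplies elements $a_i\in\Lcirc$ with $|a_i|\to 1$ lying in the conductor $\mathfrak{c}_i:=\{a\in\Lcirc:a\Lcirc\subseteq A_i\}$; the Leibniz calculation $a_i\,dy=d(a_iy)-y\,da_i$ with $a_iy,a_i\in A_i$ then shows $a_i$ kills $\Omega_{\Lcirc/A_i}$, so the cokernel of $\varphi_i$ is killed by $a_i$. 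Annihilator multiplicativity in the resulting short exact sequence gives $\Ann(\Omega_{\Lcirc/\Kcirc})\supseteq(f'_i(x_i)a_i)$, whence the upper bound $|f'_i(x_i)|\cdot|a_i|\le\delta_{L/K}$.

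For the matching lower bound I use trace duality. Since $A_i$ is $\Kcirc$-free on $1,x_i,\dots,x_i^{n-1}$, the classical dual-basis computation yields $A_i^\vee=(1/f'_i(x_i))A_i$, where $M^\vee:=\{y\in L:\operatorname{Tr}_{L/K}(yM)\subseteq\Kcirc\}$. The inclusion $\Lcirc\subseteq(1/a_i)A_i$ dualizes to $(\Lcirc)^\vee\supseteq a_iA_i^\vee=(a_i/f'_i(x_i))A_i$, and identifying $(\Lcirc)^\vee$ with the inverse different $\delta_{L/K}^{-1}\Lcirc$ (an identification supplied by the almost-cyclicity of $\Omega_{\Lcirc/\Kcirc}$ delivered by $\varphi_i$) extracts $|f'_i(x_i)|\ge|a_i|\cdot\delta_{L/K}$. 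Combining, $|a_i|\cdot\delta_{L/K}\le|f'_i(x_i)|\le\delta_{L/K}/|a_i|$, and letting $|a_i|\to 1$ along the filtered index produces the desired formula $\delta_{L/K}=\lim_i|f'_i(x_i)|$. The main technical obstacle is this trace-duality identification in the present non-Noetherian valuation-ring setting, which I expect to handle by a direct computation with the almost-generator $dx_i$ confirming agreement between the trace-dual description and the paper's Fitting-ideal definition of the inverse different in the limit.
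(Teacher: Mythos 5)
Your first inequality is sound as far as it goes: the first exact sequence for $\Kcirc\to A_i\to\Lcirc$ together with a conductor element $a_i$ does give $f'_i(x_i)a_i\in\Ann(\Omega_{\Lcirc/\Kcirc})$, hence $\delta_{L/K}\ge|f'_i(x_i)|\cdot|a_i|$. But note that the lemma only assumes that $\Lcirc$ is a \emph{filtered union} of the $A_i$; knowing that every element of $\Lcirc$ lies in some $A_i$ does not by itself produce conductor elements with $|a_i|\to 1$ --- that is essentially the stronger ``almost monogeneous'' condition, which the paper introduces as a separate hypothesis. So already here you are quietly strengthening the assumptions (harmlessly for the application in Theorem~\ref{logdifth}, where the $A_i$ are generated by $r$-generators, but not for the lemma as stated).

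The serious gap is the reverse inequality. Your trace-duality computation only controls the \emph{Dedekind} (trace-dual) different; to conclude you must identify $(\Lcirc)^\vee$ with $\delta_{L/K}^{-1}\Lcirc$, where $\delta_{L/K}$ is the paper's different, namely $|\Ann(\Omega_{\Lcirc/\Kcirc})|$. You flag this identification as the ``main technical obstacle'' and leave it unproven, but it is the entire content of the inequality you are after: the agreement of the trace-theoretic and K\"ahler differents is delicate even in the Noetherian setting (the two can genuinely differ for non-monogeneous extensions), and the standard proofs of their agreement run through exactly the kind of generator computation you are trying to avoid, so the detour through duality is essentially circular. The paper's proof bypasses all of this in one line: K\"ahler differentials commute with filtered colimits, so $\Omega_{\Lcirc/\Kcirc}=\varinjlim_i\Omega_{A_i/\Kcirc}$ with each term cyclic on $dx_i$ with annihilator $(f'_i(x_i))$, and $\delta_{L/K}$ is, by the definition adopted in the paper, the limit of the annihilators of these almost generators. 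If you want to keep your two-sided-bound structure, the missing inequality can be obtained without trace duality: extend the derivation $d/dx_i\:A_i\to A_i/(f'_i(x_i))$ to $\Lcirc$ using $a_i\Lcirc\subseteq A_i$ and the Leibniz rule; this produces an $\Lcirc$-linear map $\Omega_{\Lcirc/\Kcirc}\to a_i^{-1}\Lcirc/a_i^{-1}f'_i(x_i)\Lcirc$ sending $dx_i$ to a unit, whence any $c\in\Ann(\Omega_{\Lcirc/\Kcirc})$ satisfies $|c|\le|f'_i(x_i)|/|a_i|$.
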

\begin{proof}
K\"ahler differentials are compatible with filtered colimits, hence $\Omega_{\Lcirc/\Kcirc}$ is the filtered colimit of $\Omega_{A_i/\Kcirc}=\Kcirc dx_i/\Kcirc f'_i(x_i) dx_i$ and we obtain that $$\delta_{L/K}=\lim_i|\Ann(\Omega_{A_i/\Kcirc})|=\lim_i|f'_i(x_i)|.$$
\end{proof}

\subsubsection{Upper jumps and the different}
In the classical discrete-valued case, one can compute the different in terms of the ramification jumps, see \cite[Ch. IV, Prop. 4]{locfield}. This extends to our case as follows.

\begin{theor}\label{logdifth}
Assume that $L/K$ is an almost monogeneous Galois extension of real-valued fields and the valuation of $K$ is not discrete. Let $r_0>r_1>\dots>r_n$ be the jumps of the ramification filtration, $g_i=|G_{r_i}|$ and $g_{n+1}=1$. Then $\delta_{L/K}=\prod_{i=0}^nr_i^{g_i-g_{i+1}}$, in particular, $\delta_{L/K}$ is the coefficient of the linear part of $\varphi_{L/K}$, i.e. $\varphi_{L/K}(t)=\delta_{L/K} t$ on the interval $[0,r_n]$.
\end{theor}
\begin{proof}
It suffices to establish the formula for $\delta_{L/K}$ since the second claim then follows from the product formula in \ref{prodsec}. Given $s\in|\Lcirccirc|$ choose an $s$-generator $x_s\in\Lcirc$ and let $f_s(t)$ be its minimal polynomial over $K$. Then $\Lcirc$ is the filtered union of its subrings $\Kcirc[x_s]$, and so $$\delta_{L/K}=\lim_{s\to 1}|f'_s(x_s)|=\lim_{s\to 1}\prod_{\sigma\neq 1}|x_s-\sigma(x_s)|$$ by Lemma~\ref{difalmlem}. By Lemma~\ref{ilem}, the latter limit is equal to $\prod_{\sigma\neq 1}i_{L/K}(\sigma)$. It remains to note that $i_{L/K}(\sigma)=r_i$ if and only if $\sigma\in G_{r_i}\setminus G_{r_{i+1}}$.
\end{proof}

Using that the different and the Herbrand function are multiplicative in towers, we also obtain:

\begin{cor}\label{logdifcor}
Assume that $L/K$ is an extension of real valued fields such that the Galois closure of $L/K$ is almost monogeneous. Then $\delta_{L/K}$ is the coefficient of the linear part of $\varphi_{L/K}$.
\end{cor}

\subsubsection{Extensions of degree $p$}
For extensions of degree $p$ there is a single break point $r_0$. Using the above corollary we see that $\varphi_{L/K}$ is completely described by the different.

\begin{cor}\label{degplem}
Let $L/K$ be as in Corollary~\ref{logdifcor} and assume that $[L:K]=p=\cha(\tilK)$. Then $\varphi_{L/K}$ has degrees 1 and $p$, and $\delta_{L/K}$ is the coefficient of the linear part of $\varphi_{L/K}$. In particular, the only break point is given by $r_0^{p-1}=\delta_{L/K}$.
\end{cor}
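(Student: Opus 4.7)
The plan is to apply Theorem~\ref{logdifth} directly, using the structural constraint that $|G|=p$ is prime. Because every ramification group $G_r$ is a subgroup of the cyclic group $G=\Gal(L/K)$ of prime order $p$, only the values $\{1\}$ and $G$ are possible. Since the identity satisfies $i_{L/K}(1)=0$, we have $G_0=\{1\}$, and since $i_{L/K}(\sigma)\le 1$ for all $\sigma$, we have $G_1=G$. Hence there is exactly one jump $r_0\in(0,1]$, with $G_r=\{1\}$ for $r<r_0$ and $G_r=G$ for $r\ge r_0$. In particular every nontrivial $\sigma\in G$ satisfies $i_{L/K}(\sigma)=r_0$.

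Next I read off the Herbrand function. With $n=0$, $g_0=|G_{r_0}|=p$ and $g_1=1$, the degree description in the definition of $\varphi_{L/K}$ gives degree $g_0=p$ on $[r_0,r_{-1}]=[r_0,1]$ and degree $g_1=1$ on $[r_1,r_0]=[0,r_0]$. This is the two-slope structure claimed in the statement, with the single break at $r_0$.

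Finally I compute $r_0$ in terms of the different. Theorem~\ref{logdifth} specializes with $n=0$ to
\[
\delta_{L/K}=\prod_{i=0}^{0}r_i^{g_i-g_{i+1}}=r_0^{\,g_0-g_1}=r_0^{\,p-1},
\]
which is the asserted identity $r_0^{p-1}=\delta_{L/K}$.

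There is no real obstacle here: the only hypothesis that needs to be in place is almost monogeneity of $L/K$ (which is the standing assumption of Theorem~\ref{logdifth}), and everything else reduces to the fact that a subgroup lattice of a cyclic group of prime order has only two elements, forcing a single ramification jump.
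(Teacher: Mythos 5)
Your proof is correct and follows the same route the paper intends: the corollary is stated without a separate proof precisely because, as you show, primality of $|G|$ forces every $G_r$ to be $\{1\}$ or $G$, hence a single jump $r_0$ with slopes $1$ and $p$, and Theorem~\ref{logdifth} with $n=0$ gives $\delta_{L/K}=r_0^{g_0-g_1}=r_0^{p-1}$. The only detail worth making explicit is that the single-jump claim uses that each $G_r$ is a subgroup (via the ultrametric inequality $i_{L/K}(\sigma\tau)\le\max(i_{L/K}(\sigma),i_{L/K}(\tau))$), which is exactly how one sees that all nontrivial elements share the same inertia value.
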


%\subsubsection{Generalization to separable extensions}
%If $K$ is almost monogeneous then Lemma~\ref{tameherlem}, Theorem~\ref{logdifth} and Corollary~\ref{degplem} hold for any finite separable henselian extension $L/K$. This is deduced from the analogous results for the Galois closure $F/K$ of $L/K$ and the Galois extension $F/L$. More concretely, to extend \ref{tameherlem} one uses that if $L/K$ is tame then $F/K$ is tame, and to extend \ref{logdifth} one uses that the different is multiplicative in towers of extensions.

\subsection{Relation to the profile function}\label{concectsec}
We conclude the paper with describing the profile function in terms of the Herbrand function at all points not of type 1. In addition, one can describe the limit behaviour of $\varphi_f$ at wildly ramified points and type 2 points using the logarithmic Herbrand functions of valuation fields of height 2. Since we have not established the higher ramification theory in the latter case, we will provide the description and only outline the argument.

\subsubsection{Comparison theorem}
We start with the comparison at points not of type 1.

\begin{theor}\label{comparth}
Assume that $f\:Y\to X$ is a generically \'etale morphism between nice compact curves. Then for any point $y\in Y^\hyp$ with $x=f(y)$, $L=\calH(y)$ and $K=\calH(x)$, the profile function $\varphi_y$ of $f$ at $y$ coincides with the Herbrand function $\varphi_{L/K}$.
\end{theor}
\begin{proof}
We will deduce this from Theorem~\ref{invth}. By assumption, $L/K$ is separable, hence it suffices to check that the Herbrand function $\varphi_{L/K}$ satisfies conditions (a), (b) and (d) of that theorem. This was done in Theorem~\ref{Herth}(i), Corollary~\ref{tamehercor} and Corollary~\ref{degplem}, respectively.

%Let $Z\to X$ be the Galois closure of $f$. Since the profile functions are compatible with compositions of morphisms and the Herbrand functions are compatible with compositions of field extensions, it suffices to prove the claim for ramified Galois coverings $Z\to Y$ and $Z\to X$. Thus, we can assume that $f$ is Galois. Shrinking $Y$ and $X$ we can then achieve that $f$ splits into a composition of residually tame morphisms and morphisms of degree $p$. In the first case, both functions are the identity by Lemmas~\ref{tamelem} and \ref{tameherlem}. In the second case, the different $\delta_{L/K}$ determines $\varphi_{L/K}$ via Corollary~\ref{degplem}, and it remains to prove that the same formula works for $\varphi_y$. If $y$ is of type 2 then this is the separable case of Theorem~\ref{simplelem}. This implies that the same relation between $\delta_f(y)$ and $\varphi_y$ holds for any point of $Y^\hyp$ because both $\delta_f$ and $\varphi_f$ are piecewise monomial, see \cite[Corollary~4.1.8]{CTT} and Theorem~\ref{extth}.
\end{proof}

\subsubsection{The radii of $N_{f,\ge d}$}\label{radii}
For the sake of completeness, let us explicitly express the radii of the sets $N_{f,\ge d}$ in terms of the Herbrand function.

\begin{theor}\label{radiith}
Assume that $f\:Y\to X$ is a generically \'etale morphism between nice compact curves and $\Gamma=(\Gamma_Y,\Gamma_X)$ is a radializing skeleton of $f$, then

(i) Each $N_{f,d}$ with $d\notin p^\bfN$ is a union of edges and vertices of $\Gamma_Y$. So, its closure is a finite subgraph of $\Gamma_Y$.

(ii) The radius $r_i$ of the radial set $N_{f,\ge p^i}$ is computed as follows: if $y\in\Gamma_Y^{(2)}$, $L=\calH(y)$ and $K=\calH(x)$, then $r_i(y)$ is the break point $r$ of $\varphi_{L/K}$ such that $\deg\varphi_{L/K}<p^i$ precisely on the interval $[0,r)$.
\end{theor}
\begin{proof}
By Theorem~\ref{comparth}, $\varphi_{L/K}$ coincides with the profile function $\varphi_y$, hence its degree (or logarithmic derivative) is equal to the profile of the multiplicity function by Lemma~\ref{multlem}. The assertion of (ii) follows in an obvious way. 

Recall that by Theorem~\ref{multth2} the multiplicity outside of $\Gamma_Y$ takes values in $p^\bfN$. The multiplicity is constant along the edges of $\Gamma_Y$ by \cite[Lemma~3.5.8(ii)]{CTT}. This implies (i).
\end{proof}

\subsubsection{The limit behaviour at type 1 points}\label{type1sec}
Assume, now, that $f$ is wildly ramified at $y\in Y$ and $x=f(y)$. By \cite[Theorem~4.6.4]{CTT}, $\delta_f$ has a zero at $y$ whose order is equal to the order of the log different $\delta_{y/x}^\rmlog$ of $\calO_y/\calO_x$. In particular, if $I$ is an interval starting at $y$ and $I=[0,r_1]$ is a radius parametrization induced by a parameter $t_y\in m_y\setminus m_y^2$ then $\delta_f(r)=cr^{\delta_{y/x}^\rmlog}$. In \cite{CTT} one also gets rid of $c$ by rescaling $t_y$, but we will need the following finer construction. Provide $K_y=\Frac(\calO_y)$ with the valuation $|\ |_y$ of height two composed of the discrete valuation of $K_y$ with uniformizer $t_y$ and the standard valuation on the residue field $\calO_y/m_y=k$, and define $K_x$ analogously. To any $a\in|K^\circ_y|_y$ one can associate a monomial function $\psi_y(a)$ on $I$ as follows: there exists $c\in k$ with $a=|ct_y^n|_y$ and $|c|$ is uniquely defined by this condition, so we take $\psi_y(a)$ to be the absolute value of the function $ct_y^n$ on $I$ (i.e. $\psi_y(a)(r)=|c|r^n$ for any $r\in I$). This construction extends to piecewise monomial functions as follows: if $\varphi\:|K^\circ_y|\to|K^\circ_x|$ is a piecewise monomial function with breaks at $s_i$ then $\psi_y(\varphi)\:I\times[0,1]\to I\times[0,1]$ is the piecewise monomial function with breaks at $\psi_y(s_i)$ and the same degrees as $\varphi$ on the intervals. It is easy to see that the correspondence $\varphi\mapsto\psi_y(\varphi)$ preserves composition of functions locally at $y$.

Now we can describe the limit behaviour at $y$. Let $\varphi_y$ be the {\em logarithmic} Herbrand function of $K_y/K_x$. Then there exists $r_0\in(0,r_1]$ such that $\psi_y(\varphi_y)$ coincides with $\varphi_f$ on the subinterval $(0,r_0)$ of $I$. As usual, the proof uses the splitting method: both functions are compatible with compositions of morphisms, hence passing to the Galois closure and using a $p$-Sylow subgroup in the decomposition group of $y$ one reduces to the cases of tame morphisms and morphisms of degree $p$. The first case is, as usual, trivial. In the second case, both functions have a single break point, and proving that they are equal reduces to proving that $\psi_y(\delta_{K_y/K_x}^\rmlog)=\delta_f$. This is done essentially by the same argument as used in the proof of \cite[Theorem~4.6.4]{CTT}, namely, both sides are expressed as $|ht_yt_x^{-1}|$, where $h=\frac{dt_x}{dt_y}$.

\subsubsection{The limit behaviour at type 2 points}\label{type5sec}
A branch of $Y$ at a type 2 point $y$ can be described by a type 5 point $z$, see \cite[Section~3.4]{CTT}. The field $\calH(z)$ is a valued field of height two. If $I=[y,y']$ is an interval in the direction of $z$ then there is a natural map $\psi_z$ that associates to elements of $|\calH(z)|$ monomial functions on $I$. Furthermore, $\psi_z$ extends to piecewise monomial functions and $\psi_z(\varphi_{\calH(z)/\calH(f(z))})$ coincides with $\varphi_f$ on a small enough neighborhood of $y$ in $I$. The arguments are the same as outlined in the previous section.

\begin{rem}
The results stated in the last two sections indicate that it is more natural to interpret $\varphi_f$ as the logarithmic Herbrand function. This is not essential when interpreting the single value of $\varphi_f$ at a point $y\in Y^\hyp$, but becomes visible in the study of asymptotic behaviour. The same observation holds true already for the different function, see \cite[Section~4.7.3 and Remark~4.7.4]{CTT}.
\end{rem}

\appendix

\bibliographystyle{amsalpha}
\bibliography{Metric_Uniformization}

\end{document}